\newcommand{\black}{\color{black}}
\newtheorem{theorem}{Theorem}
\newtheorem{lemma}[theorem]{Lemma}
\newtheorem{corollary}[theorem]{Corollary}
\newtheorem{proposition}[theorem]{Proposition}
\theoremstyle{definition}
\theoremstyle{remark}
\newtheorem{remark}[theorem]{Remark}
\newcommand\beq{\begin {equation}}
\newcommand\eeq{\end {equation}}
\newcommand\beqs{\begin {equation*}}
\newcommand\eeqs{\end {equation*}}
\newcommand\R{{\mathbb R}}
\newcommand\E{{\mathbb E}}
\newcommand\sph{{\mathbb S}}
\newcommand\eu{{\rm e}}
\title{\bf {A Stein deficit for \\ the logarithmic Sobolev inequality}}
\author{Michel Ledoux\footnote{{Institut de Math\'ematiques de Toulouse, Universit\'e
de Toulouse -- Paul-Sabatier, F-31062 Toulouse, France \& Institut Universitaire de France} {\tt   ledoux@math.univ-toulouse.fr}}\quad\quad\quad Ivan Nourdin\footnote{{Facult\'e des Sciences, de la Technologie et de la Communication; UR en Math\'ematiques. Luxembourg University, 6, rue Richard Coudenhove-Kalergi, L-1359 Luxembourg} {\tt ivan.nourdin@uni.lu}; IN was partially supported by the Grant F1R-MTH-PUL-15CONF  (CONFLUENT) at Luxembourg University. }\quad  \quad\quad
Giovanni Peccati\footnote{{Facult\'e des Sciences, de la Technologie et de la Communication; UR en Math\'ematiques. Luxembourg University, 6, rue Richard Coudenhove-Kalergi, L-1359 Luxembourg} {\tt giovanni.peccati@gmail.com}; GP was partially supported by the grant F1R-MTH-PUL-15STAR (STARS) at Luxembourg University.}}
\begin{document}
\maketitle
\begin{abstract} {We provide explicit lower bounds for the deficit in the Gaussian
logarithmic Sobolev inequality in terms of differential operators that are naturally
associated with the so-called Stein characterization of the Gaussian distribution.
The techniques are based on a crucial use of the representation of the relative
Fisher information, along the Ornstein-Uhlenbeck semigroup, in terms of the
Minimal Mean-Square Error from information theory.}

\bigskip

\noindent \textbf{Keywords:} {Deficit, logarithmic Sobolev inequality,
Ornstein-Uhlenbeck semigroup, Minimal Mean-Square Error, Stein kernel.}\\

 \noindent
 \textbf{2000 Mathematics Subject Classification:} {60E15, 26D10, 60B10}
\end{abstract}

\bibliographystyle{plain}

\maketitle



\vskip 10mm

\section {Introduction and main results} \label {sec.1}

The classical logarithmic Sobolev inequality for the standard Gaussian measure
$$
d\gamma(x) \, = \,  \eu^{-|x|^2/2} \frac {dx}{(2\pi)^{n/2}}
$$
on the Borel sets of $\R^n$ expresses that for any
smooth probability density $f$ with respect to $\gamma $,
\beq \label {eq.logsob}
 {\rm H} (f) \, = \, \int_{\R^n} f \log f \, d\gamma 
      \, \leq \, \frac {1}{2} \, \int_{\R^n} \frac {|\nabla f|^2}{f} \, d\gamma
        \, = \, \frac {1}{2} \, {\rm I} (f)
\eeq
where ${\rm H}(f)$ is the relative entropy of the measure $f d\gamma$ with respect to
$\gamma $ and ${\rm I}(f)$ is its Fisher information.

It is a classical result, due to E.~Carlen \cite {C91a,C91b}, that the exponential densities
\beq \label {eq.extremal}
e_b (x) \, = \,  \eu^{b\cdot x - |b|^2/2}, \quad x \in \R^n, \, \, \, b \in \R^n,
\eeq
are saturating the inequality \eqref {eq.logsob} and are the only ones.
Note that the probability density $e_b$ with respect to $\gamma$
has mean $b$ and covariance matrix the identity ${\rm Id}$.

Modulo smoothness assumptions on the underlying density $f$,
a proof of this result may be given by interpolation along the
Ornstein-Uhlenbeck semigroup (cf.~\cite {L91}). Let ${(P_t)}_{t \geq 0}$ be the
Ornstein-Uhlenbeck semigroup with integral representation
\beq \label {eq.ou}
P_t g(x) \, = \,  \int_{\R^n} g \big ( \eu^{-t} x + \sqrt {1 - \eu^{-2t}} \, y \big ) d\gamma (y),
   \quad t \geq 0, \, \, x \in \R^n ,
\eeq
for any suitable $ g : \R^n \to \R$. By expansion along this semigroup, the Bakry-\'Emery calculus
(see \cite {B94,BE85,BGL14,L91} and below) yields that
\beq \label {eq.representation}
 {\rm H} (f)  \, = \,  \frac {1}{2} \, {\rm I} (f) - 
      \int_0^\infty \! \int_{\R^n} P_t f  \, \big | {\rm Hess} (\log P_t f) \big |^2 d\gamma \, dt.
\eeq
Here and throughout this work, $|\cdot|$ denotes the Euclidean norm on vectors and
matrices (Hilbert-Schmidt norm).
Hence, if there is equality in \eqref {eq.logsob}, for almost every $t \geq 0$ and $x \in \R^n$,
$ {\rm Hess} (\log P_t f) (x) = 0 $ so that $\log f (x)$ is affine.

Following recent investigations for classical Sobolev and isoperimetric inequalities,
both for the Lebesgue and Gaussian measures
\cite {BBJ14,CFMP09,E14,FMP10, FMP08,MN15}, the question has been raised to
quantify the deficit in the logarithmic Sobolev inequality via a suitable distance to
the saturating exponential densities.
To this task, introduce, for a (smooth) probability density
$f$ with respect to $\gamma$, the deficit
\beq \label {eq.deficit}
 \delta (f) \, = \,   \frac {1}{2} \, {\rm I} (f) -  {\rm H} (f ) \, \geq \, 0
\eeq
in the logarithmic Sobolev inequality \eqref {eq.logsob} for the density $f$. We speak equivalently
of the deficit of the probability $d\mu = f d\gamma$.
Relevant lower bounds on the deficit may then be interpreted as a stability estimate
on the functional inequality with respect to the extremizers.

Stability in the logarithmic Sobolev
has therefore motivated recently a number of investigations.
However, the various conclusions so far do not appear fully satisfactory, in particular with
respect to dimension free bounds which should reasonably be expected
(as the logarithmic Sobolev inequality itself does not depend on the dimension of the underlying
state space).
Note in particular that the corresponding study of the deficit in the Gaussian 
isoperimetric inequality, after the first investigation in \cite {MN15}, finally produced (optimal)
dimension free bounds \cite {BBJ14,E14} with respect to a natural distance
to the extremal sets (half-spaces). While the logarithmic Sobolev inequality may be derived from
the Gaussian isoperimetric inequality (cf.~\cite {BGL14}), the derivation does not
seem to preserve any information on the deficit.

To briefly survey some of the recent conclusions on the deficit in the logarithmic Sobolev inequality,
note first the lower bound, under the condition $\int_{\R^n} |x|^2 d\mu \leq n$,
\beq \label {eq.bgrs}
 \delta (f) \, \geq \, \frac {1}{100 \,n} \, {\rm W}_2(\mu, \gamma)^4
\eeq
emphasized in \cite {BGRS14} after an inequality of \cite {BL06}, where ${\rm W}_2(\mu, \gamma)$ is the Kantorovich-Wasserstein
distance between $\mu $ and $\gamma$ given by
$$
{\rm W}_2(\mu, \gamma) \, = \, \inf \bigg ( \int_{\R^n} \! \int_{\R^n} |x-y|^2 d\pi (x,y) \bigg)^{1/2}
$$
the infimum being taken over all couplings $\pi$
on $\R^n \times \R^n$ with respective marginals $\mu$ and $\gamma$.
The proof of \eqref {eq.bgrs} put forward in \cite {BGRS14}
relies on the dimensional self-improved form of the logarithmic Sobolev inequality \cite {BL06}
(see also \cite {BGL14}) expressing that for any smooth density $f$ with respect to $\gamma$,
\beq \label {eq.bl}
{\rm H} (f) \, \leq \, \frac {1}{2} \int_{\R^n} \Delta f \, d\gamma
    + \frac {n}{2} \log \bigg ( 1 + \frac{{\rm I}(f)}{n} 
         - \frac {1}{n} \int_{\R^n} \Delta f \,  d\gamma \bigg).
\eeq
(It is part of the result that the expression inside the logarithm is positive.)
Hence, after a simple rewriting,
$$
\delta (f) \, \geq \, \frac {n}{2} \, \theta \bigg ( \frac {{\rm I}(f) - \int_{\R^n} \Delta f  d\gamma}{n} \bigg)
$$
where $\theta (r) = r - \log (1+r)$, $r > -1$.
Now, by a double integration by parts with respect to the Gaussian density,
$$
\int_{\R^n} |x|^2 d\mu  \, = \, \int_{\R^n} |x|^2 f \, d\gamma 
  \, = \, n + \int_{\R^n} \Delta f \, d\gamma .
$$
Hence, whenever $\int_{\R^n} |x|^2 d\mu \leq n$, then $ \int_{\R^n} \Delta f  d\gamma \leq 0$
and since $\theta$ is increasing,
$$
\delta (f) \, \geq \, \frac {n}{2} \, \theta \bigg ( \frac {{\rm I}(f)}{n} \bigg).
$$

Next we may use again the logarithmic Sobolev inequality
$ {\rm I}(f)  \geq  2 \, {\rm H}(f)$ together with the Talagrand \cite {T96}
quadratic transportation cost inequality (cf.~e.g.~\cite {BGL14, OV00,V09})
\beq \label {eq.talagrand}
 2 \, {\rm H}(f) \, \geq \, {\rm W}_2(\mu, \gamma)^2.
\eeq
Under the condition $\int_{\R^n} |x|^2 d\mu \leq n$,
${\rm W}_2(\mu, \gamma)^2 \leq 4n$, and since $\theta (r) \geq \frac {r^2}{50}$
(for example) on the interval $[0,4]$, the lower bound \eqref {eq.bgrs} follows.
In Section~\ref {sec.6}, we will provide an independent proof of \eqref {eq.bgrs} based
on the information theoretical tools developed in this work.

In another direction, the dimension free lower bound
$$
 \delta (f) \, \geq \, c(\lambda) \, {\rm W}_2(\mu, \gamma)^2
$$
has been established in the recent \cite {FIL14} but under the further assumption that
$\mu $ centered satisfies a Poincar\'e inequality with constant $\lambda >0$.

One drawback of \eqref {eq.bgrs} is of course, besides the dimensional condition
$\int_{\R^n} |x|^2 d\mu \leq n$, that the lower bound depends on $n$ and vanishes
as $n \to \infty$. It is mentioned in
\cite {FIL14} that one cannot expect a dimension free lower bound
only in terms of the Kantorovich-Wasserstein metric ${\rm W}_2$. In addition,
for the extremal $e_b$ of \eqref {eq.extremal}, $\int_{\R^n} |x|^2 e_b d\gamma = n + |b|^2$,
so that the condition $\int_{\R^n} |x|^2 d\mu \leq n$ rules out all extremizers but the
centered one (that is $\gamma$ itself). It is therefore
of interest to look for a measure to the extremizers which may produce stability estimates
independent of the dimension, moreover suitably identifying the extremal densities.

The papers \cite {BGRS14,FIL14}, as well as \cite {CE15}, contain further stability results
involving related transport distances between modifications of $\mu $ and $\gamma$,
however still dimensional. The note \cite {FPR15} presents a lower bound on the deficit based
on a distance (modulo translation) in dimension $n$ starting with a distance
in dimension one first introduced in \cite {BF14}.

\medskip

The aim of this work is to suggest a lower bound on the deficit
$\delta (f)$ in the logarithmic Sobolev inequality in terms of the
Stein characterization of the standard normal distribution $\gamma$.
Before addressing the conclusion, let us first emphasize that, in order to
make sense of $\delta (f)$, it is legitimate to assume that ${\rm H}(f) < \infty$.
Since the deficit should be small, it will also hold that
${\rm I}(f) < \infty$. In particular, this condition entails the fact that the density $f$
has some smoothness, and regularity will be implicitly assumed for the various
expressions to be well-defined. In addition, the
finiteness of ${\rm H}(f)$ ensures by the entropic inequality (see e.g.~\cite[Section 5.1.1]{BGL14})
that
$$
  \int_{\R^n} |x|^2  d\mu \, = \,
\int_{\R^n} |x| ^2f \, d\gamma \, \leq \, 4 \, {\rm H} (f ) 
     + 4 \log \int_{\R^n} \eu^{|x|^2/4} d\gamma < \infty.
$$
Throughout this study of the deficit $\delta (f)$ of the density $f$, it will
therefore be assumed that $\int_{\R^n}|x|^2 d\mu < \infty$
(although this condition is not everywhere strictly necessary). In particular, we may consider
the covariance matrix $\Gamma = {(\Gamma_{ij})}_{1 \leq i, j \leq n}$ of $\mu$ given for
all $i,j = 1, \ldots , n$ by
$$
\Gamma_{ij} \, = \, \int_{\R^n} x_ix_j \, d\mu - 
     \int_{\R^n} x_i \, d\mu  \int_{\R^n} x_j \, d\mu .
$$

\bigskip

The investigation will therefore involve the Stein characterization of the normal distribution,
and more generally ideas related to Stein's method (cf.~\cite {CGS11,NP12,S86}).
Recall indeed the basic integration by parts formula
$$
\int _{\R^n} x \varphi \, d\gamma \, = \, \int_{\R^n} \nabla \varphi \, d\gamma
$$
for any smooth $\varphi : \R^n \to \R$. This equation is characteristic of 
the Gaussian distribution $\gamma$
in the sense that if $\mu $ is a probability measure on $\R^n$
such that for any smooth $\varphi : \R^n \to \R$,
\beq \label {eq.stein}
\int _{\R^n} x \varphi \, d\mu \, = \, \int_{\R^n} \nabla \varphi \, d\mu
\eeq
(as vectors in $\R^n$), then it is necessarily equal to $\gamma$. Indeed, apply for example
\eqref {eq.stein} to $\varphi (x) = \eu^{i \lambda\cdot x}$, $\lambda \in \R^n$,
to get that the Fourier transform $F(\lambda) = \int_{\R^n} \eu^{i \lambda \cdot x} d\mu(x)$,
$\lambda \in \R^n$, of $\mu$ satisfies
$\nabla F = - \lambda F $, hence $F(\lambda) = \eu^{-|\lambda|^2/2}$ and $\mu = \gamma$. 

According to this description, let, for a given probability $\mu$ on $\R^n$ (with finite mean),
\beq \label {eq.distance}
{\cal D} (\mu, \gamma) 
   \, = \, \sup_{\varphi \in {\cal B}} \bigg | \int _{\R^n} \big [ x \varphi - \nabla \varphi \big ] d\mu \bigg|
\eeq
where the supremum runs over the class ${\cal B}$ of smooth functions $\varphi $ on $\R^n$ with
$$
{\| \varphi \|}_\infty \, \leq \, 1, \quad {\| \nabla \varphi \|}_\infty  \, \leq \, 1, \quad 
{\big \| {\rm Hess} ( \varphi ) \big \|}_\infty  \, \leq \, 1.
$$
A more precise class naturally appearing as a family of resolvents (for the Ornstein-Uhlenbeck
semigroup) will be analyzed in Section~\ref {sec.2}, but for the exposition at this
stage, and the comparison with more classical distances, we use the class ${\cal B}$
to state the main results.

\medskip

The main result of this work is a stability estimate
in the logarithmic Sobolev inequality by means of the Stein functional
\eqref {eq.distance}. If $f$ is a probability density
with respect to $\gamma$ with mean $b$, define the shifted probability density
\beq \label {eq.fb}
f_b(x) \, = \,  f(x + b) \, \eu ^{-(b\cdot x + |b|^2/2)} , \quad x \in \R^n,
\eeq
which has mean zero with respect to $\gamma$. (In other words,
if $X$ is a random vector with distribution $f d\gamma$ and
mean $b$, $X-b$ has distribution $f_bd\gamma$ and mean zero.) Then, whenever
$d\mu_b = f_b d\gamma$ is close to $\gamma$, that is $f_b$ is close to the
constant $1$ function, $f(x+b)$ is close to $\eu ^{b\cdot x + |b|^2/2}$,
hence after translation $f$ is close to the extremal $e_b$ of \eqref {eq.extremal}.
Note furthermore that ${\rm I}(f_b) = {\rm I}(f) - |b|^2 \leq {\rm I}(f)$.

\begin {theorem} \label {thm.main1}
Let $f$ be a probability density on $\R^n$ and let $d\mu = f d\gamma$.
Assume that $\mu$ has barycenter $b$ and covariance matrix
$\Gamma \leq {\rm Id} $ (in the sense of symmetric matrices). Then, 
$$
\delta (f) \, \geq \,   \frac {1}{64 (1 + {\rm I}(f_b))^2} \, {\cal D} (\mu_b, \gamma)^4 .
$$
\end {theorem}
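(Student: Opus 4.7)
The plan is to first reduce to the centered case. Expanding $f_b(x) = f(x+b) \, \eu^{-b \cdot x - |b|^2/2}$ and using the Stein identity $\int_{\R^n} \nabla f \, d\gamma = b$, one verifies ${\rm H}(f_b) = {\rm H}(f) - |b|^2/2$ and ${\rm I}(f_b) = {\rm I}(f) - |b|^2$, so that $\delta(f) = \delta(f_b)$. Moreover $\mu$ and $\mu_b$ share the same covariance, so $\Gamma \leq {\rm Id}$ is preserved. Since the right-hand side of the inequality involves only $f_b$ and $\mu_b$, we may assume $b = 0$ throughout, in which case $\int_{\R^n} \nabla f \, d\gamma = 0$.

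\textbf{Representation of the Stein functional.} For $\varphi \in {\cal B}$, integration by parts against $\gamma$ gives $\int_{\R^n} [x\varphi - \nabla\varphi] \, d\mu = \int_{\R^n} \varphi \, \nabla f \, d\gamma$. Since $\nabla P_t f = \eu^{-t} P_t \nabla f \to 0$ as $t \to \infty$, I would apply the fundamental theorem of calculus along the Ornstein-Uhlenbeck flow, compute the time derivative using the intertwining $\nabla L = (L-I)\nabla$ and self-adjointness of $L$ in $L^2(\gamma)$, and then integrate by parts once more in space through the identity $\int L\varphi \, \nabla P_t f \, d\gamma = -\int {\rm Hess}(P_t f) \, \nabla \varphi \, d\gamma$ (itself proved by two Gaussian integrations by parts). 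This produces
\[
\int_{\R^n} \varphi \, \nabla f \, d\gamma \;=\; \int_0^\infty \!\int_{\R^n} {\rm Hess}(P_t f) \, \nabla \varphi \, d\gamma \, dt \;+\; \int_{\R^n} R_1 \varphi \, \nabla f \, d\gamma,
\]
where $R_1\varphi = \int_0^\infty \eu^{-t} P_t \varphi \, dt$ is the resolvent of the Ornstein-Uhlenbeck generator at $\lambda = 1$. Iterating this identity and using $\nabla R_1^k \varphi = R_2^k \nabla \varphi$ together with $\| R_2^k \|_{L^\infty \to L^\infty} \leq 2^{-k}$, and observing that the remainder vanishes as $N \to \infty$ (since $R_1^N \varphi$ converges to the constant $\int \varphi \, d\gamma$ and $\int \nabla f \, d\gamma = 0$), I obtain
\[
\int_{\R^n} \varphi \, \nabla f \, d\gamma \;=\; \int_0^\infty \!\int_{\R^n} {\rm Hess}(P_t f) \, \widetilde{\nabla \varphi} \, d\gamma \, dt,
\]
where $\widetilde{\nabla \varphi} = (I - R_2)^{-1} \nabla \varphi$ satisfies $\| \widetilde{\nabla\varphi} \|_\infty \leq 2$.

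\textbf{Extracting ${\rm Hess}(\log P_t f)$ and conclusion.} The decomposition ${\rm Hess}(P_t f) = P_t f \, \bigl[{\rm Hess}(\log P_t f) + \nabla \log P_t f \otimes \nabla \log P_t f\bigr]$ splits the integrand into a logarithmic-Hessian piece and a rank-one piece. The rank-one piece is bounded via $\| \widetilde{\nabla\varphi} \|_\infty \leq 2$ and ${\rm I}(P_t f) \leq \eu^{-2t} {\rm I}(f)$ by $\int_0^\infty {\rm I}(P_t f) \, dt \leq {\rm I}(f)/2$. For the logarithmic-Hessian piece, a Cauchy-Schwarz inequality against the joint measure $dt \otimes P_t f \, d\gamma$ on $[0,\infty) \times \R^n$ produces the factor $\sqrt{\delta(f)}$ via \eqref{eq.representation}; the covariance bound $\Gamma \leq {\rm Id}$, which implies $\int |x|^2 P_t f \, d\gamma \leq n$ uniformly in $t$, is used to control the complementary factor by a constant multiple of $\sqrt{1 + {\rm I}(f)}$ in a dimension-free way. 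Combining these estimates gives $\bigl|\int \varphi \nabla f \, d\gamma\bigr|^2 \leq 8 \, (1 + {\rm I}(f)) \sqrt{\delta(f)}$, and squaring yields ${\cal D}(\mu,\gamma)^4 \leq 64 \, (1 + {\rm I}(f))^2 \, \delta(f)$ as claimed.

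\textbf{Main obstacle.} The principal difficulties will be: (i) justifying the resolvent iteration, namely that $\widetilde{\nabla\varphi}$ inherits a uniform sup-norm bound from $\varphi \in {\cal B}$ and that the iterated remainder truly vanishes (both relying crucially on $b = 0$); and (ii) organizing the final joint Cauchy-Schwarz so that the prefactor multiplying $\sqrt{\delta(f)}$ is bounded by a constant multiple of $1 + {\rm I}(f)$ rather than by a dimension-dependent quantity, which is precisely where the covariance hypothesis $\Gamma \leq {\rm Id}$ plays its decisive role.
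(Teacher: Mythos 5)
Your reduction to $b=0$ and the identity $\int_{\R^n}[x\varphi-\nabla\varphi]\,d\mu=\int_{\R^n}\varphi\,\nabla f\,d\gamma$ are correct, and the resolvent class you introduce is close in spirit to the class ${\cal R}$ the paper uses in Proposition~\ref{prop.alternate} (which is indeed where the factor $1+{\rm I}(f_b)$ comes from). But the core of your argument — a linear representation of the Stein functional in terms of ${\rm Hess}(P_tf)$, split into a logarithmic-Hessian piece and a rank-one piece — has two gaps that I believe are fatal.

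First, the rank-one piece. You bound $\bigl|\int_0^\infty\!\int_{\R^n}\frac{\nabla P_tf\otimes\nabla P_tf}{P_tf}\,\widetilde{\nabla\varphi}\,d\gamma\,dt\bigr|$ by $2\int_0^\infty{\rm I}(P_tf)\,dt$, which by de Bruijn equals $2\,{\rm H}(f)\le{\rm I}(f)$. This quantity does not tend to $0$ as $\delta(f)\to0$ with ${\rm I}(f)$ fixed, so it cannot be absorbed into a right-hand side of the form $C(1+{\rm I}(f))^{1/2}\delta(f)^{1/4}$. To absorb it you would need ${\rm H}(f)\lesssim(1+{\rm I}(f))^{1/2}\delta(f)^{1/4}$, a dimension-free stability estimate at least as strong as the theorem you are proving — the argument is circular at this point. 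Second, the Cauchy--Schwarz you invoke for the logarithmic-Hessian piece against $dt\otimes P_tf\,d\gamma$ on $[0,\infty)\times\R^n$ cannot close: this measure has infinite total mass, $\widetilde{\nabla\varphi}$ is merely bounded with no decay in $t$, so the ``complementary factor'' $\int_0^\infty\!\int_{\R^n}P_tf\,|\widetilde{\nabla\varphi}|^2\,d\gamma\,dt$ is $+\infty$; the hypothesis $\Gamma\le{\rm Id}$ does nothing to repair this. A further warning sign: a single Cauchy--Schwarz producing $\sqrt{\delta(f)}$ would yield $\delta(f)\gtrsim{\cal D}(\mu_b,\gamma)^2$, a quadratic bound stronger than anything known; the genuine exponent here is $4$.

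The paper's route is structurally different and is designed precisely to avoid these two problems. It does not linearize: it starts from the exact identity $4\,{\rm sh}^2(t)\,{\rm Hess}(\log P_tf)(X_t)=Z_t-(1-\eu^{-2t}){\rm Id}$, where $Z_t={\rm Cov}(X\,|\,X_t)$, so the Hessian term and your rank-one term are packaged together into a conditional covariance, and Proposition~\ref{prop.deficit} expresses $\delta(f)$ exactly as $\int_0^\infty\frac{1}{16\,{\rm sh}^4(t)}\,\E\bigl(|Z_t-(1-\eu^{-2t}){\rm Id}|^2\bigr)dt$. The covariance hypothesis then enters through Jensen: $\E(Z_t)-(1-\eu^{-2t}){\rm Id}=(1-\eu^{-2t})^2(\Gamma-{\rm Id})-\E(U\otimes U)$ with $U=\E(X-\eu^{-t}X_t\,|\,X_t)$, and since $\Gamma-{\rm Id}\le0$ and $\E(U\otimes U)\ge0$ one may drop the covariance term and retain $|\E(U\otimes U)|^2\ge[\E((\alpha\cdot U)^2)]^2$. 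A duality step ($\E((\alpha\cdot U)^2)\ge[\E(\psi(X_t)\,\alpha\cdot U)]^2$) plus Gaussian integration by parts in $N$ converts this into the Stein functional tested on $P_t\psi$, and a final Jensen in $t$ against the probability measure $4\eu^{-4t}dt$ produces the resolvent $R\psi$ and the exponent $4$. If you want to salvage your approach, you would have to find the analogue of this packaging; as written, both halves of your decomposition fail independently.
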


As mentioned above, this result will actually be proved for a metric ${\cal D}$ associated to 
a natural class of resolvents as Theorem~\ref {thm.main1bis} below (with in particular
a numerical constant in the lower bound, independent of the Fisher information).

The covariance hypothesis $\Gamma \leq {\rm Id} $ is of course not very natural, although 
the aforementioned investigations implicitely encountered
the same difficulty, and for example \eqref {eq.bgrs} assumes that ${\int_{\R^n} |x|^2 d\mu \leq n}$.
However, with respect to \eqref {eq.bgrs}, the lower bound
in Theorem~\ref {thm.main1} does not involve specifically the dimension $n$
(is actually numerical in the more precise Theorem~\ref {thm.main1bis}),
and moreover identifies the extremal with mean $b$.
Actually, it would already be
of interest to understand how the deficit $\delta$ could control the proximity of the covariance
matrix to the identity.

In Section~\ref {sec.3}, we provide a variation on Theorem~\ref {thm.main1} that
somehow takes into account this deficiency. In particular, it is shown there that
\beq \label {eq.particular}
2 \delta (f)  + \| \Gamma - {\rm Id} \|^2
     \, \geq \,   \frac {1}{64 (1 + {\rm I}(f_b))^2} \, {\cal D} (\mu_b, \gamma)^4
\eeq
where, for an $n \times n$ matrix $A$,
$$
\| A \| \, = \, \sup_{|\alpha| = 1} A \alpha \cdot \alpha
$$
(despite the notation, observe that $\|\cdot\|$ is not a norm). A more precise version (Theorem~\ref {thm.maincovariance}) allows
for a deficit for arbitrary sizes of $\| \Gamma - {\rm Id} \|$.

In another direction, the next result provides a kind of compactness argument
to bound from below the deficit by an unknown constant depending on $f$.

\begin {theorem} \label {thm.main2}
Let $f$ be a probability density on $\R^n$ and let $d\mu = f d\gamma$.
Assume that $\mu$ has barycenter $b$ and that ${\rm I}(f) < \infty$. 
Then
$$
 \delta (f) \, \geq \,   c (f) \, {\cal D} (\mu_b, \gamma) ^4 
$$
where $c(f) >0$ is a constant depending on $f$ only via the uniform integrability
of the family of measures $(\alpha \cdot x)^2 d\mu$, where $\alpha$ runs over
the unit sphere of $\R^n$.
\end {theorem}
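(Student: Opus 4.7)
\emph{Proof sketch.} Since $\delta(f_b) = \delta(f)$ and the uniform integrability of $(\alpha \cdot x)^2 d\mu$ transfers to that of $(\alpha \cdot x)^2 d\mu_b$, I reduce without loss of generality to the centred case $b=0$, writing $\Gamma$ for the covariance of $\mu$. The starting point is the refined estimate \eqref{eq.particular},
$$
2\delta(f) + \|\Gamma - \mathrm{Id}\|^2 \geq \frac{1}{64(1+\mathrm{I}(f))^2}\mathcal{D}(\mu, \gamma)^4,
$$
so the task reduces to bounding $\|\Gamma - \mathrm{Id}\|^2$ by a fraction of $\frac{1}{64(1+\mathrm{I}(f))^2}\mathcal{D}(\mu, \gamma)^4$ with a constant depending on $f$ only through the uniform-integrability modulus $\omega(R) = \sup_{|\alpha|=1}\int_{|\alpha \cdot x| > R}(\alpha \cdot x)^2 d\mu$.

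The main quantitative step is an estimate of $\|\Gamma - \mathrm{Id}\|$ via the Stein distance $\mathcal{D}(\mu, \gamma)$, obtained by testing the Stein characterization \eqref{eq.stein} against smooth truncations of linear functions. For a unit vector $\alpha$ and a smooth cut-off $\chi_R$ with $\chi_R = 1$ on $\{|x| \leq R\}$ and support in $\{|x| \leq 2R\}$, the function $\varphi_{\alpha, R}(x) = (\alpha \cdot x) \chi_R(x)/C_R$ belongs to the class $\mathcal B$ for a suitable normalization $C_R$ of order $R$. A direct integration by parts then gives
$$
C_R \cdot \mathcal{D}(\mu, \gamma) \geq \bigl|\alpha^\top(\Gamma - \mathrm{Id})\alpha\bigr| - E_\alpha(R),
$$
where the tail error $E_\alpha(R)$ is controlled by $\omega(R)$ and related first-moment quantities themselves bounded by $\omega$. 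Supremizing over $|\alpha| = 1$ and optimizing $R$ yields $\|\Gamma - \mathrm{Id}\|^2 \leq \Psi_\omega\bigl(\mathcal{D}(\mu, \gamma)\bigr)$, with $\Psi_\omega(r) \to 0$ as $r \to 0$ and $\Psi_\omega$ depending on $f$ only via $\omega$.

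Inserting this bound into \eqref{eq.particular} would conclude the proof, except that $\mathrm{I}(f)$ in the denominator is not itself controlled by uniform integrability. This is the principal obstacle, which I would circumvent by regularization: apply \eqref{eq.particular} to $P_t f$ for some small $t > 0$, for which $\mathrm{I}(P_t f) \leq n/(1 - \eu^{-2t})$ is automatic, the uniform integrability of $(\alpha \cdot x)^2 P_t f \, d\gamma$ is inherited from that of $(\alpha \cdot x)^2 f \, d\gamma$ via the Mehler representation \eqref{eq.ou} and the convexity of $z \mapsto (\alpha \cdot z)^2$, and the deficit is monotone along the semigroup, $\delta(P_t f) \leq \delta(f)$ (from \eqref{eq.representation}); moreover $\mathcal{D}(P_t f \, d\gamma, \gamma) \to \mathcal{D}(\mu, \gamma)$ as $t \to 0^+$ by dominated convergence. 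Balancing $t$ with the cut-off parameter $R$ and letting $t \to 0^+$ closes the argument. Alternatively, a pure compactness argument—extracting a tight subsequence via uniform integrability, forcing its weak limit to be $\gamma$ by lower semicontinuity of $\delta$, and reaching a contradiction with a hypothetical $\delta(f_k)/\mathcal{D}(\mu_k, \gamma)^4 \to 0$—yields the non-explicit constant $c(f)$ without the need for regularization, at the cost of losing all quantitative information.
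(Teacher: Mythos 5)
There is a genuine gap, and it sits in what you call the main quantitative step. You propose to bound $\|\Gamma-{\rm Id}\|^2$ by a function $\Psi_\omega\big({\cal D}(\mu,\gamma)\big)$ of the Stein distance and to absorb it into the right-hand side of \eqref{eq.particular}. But \eqref{eq.particular} reads $2\delta(f)+\|\Gamma-{\rm Id}\|^2\geq K\,{\cal D}(\mu,\gamma)^4$ with $K=\frac{1}{64(1+{\rm I}(f_b))^2}$, so to conclude $\delta(f)\gtrsim{\cal D}^4$ you would need $\Psi_\omega(r)\leq\frac{K}{2}\,r^4$, i.e.\ a bound on the covariance discrepancy that is \emph{quartic} in ${\cal D}$. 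Your truncation argument gives at best $\|\Gamma-{\rm Id}\|\leq\inf_R\big(C_R\,{\cal D}+E(R)\big)$ with $C_R\sim R$ and $E(R)\to0$ only as $R\to\infty$, hence $\Psi_\omega(r)\gg r^2$ for small $r$; and even in the ideal compactly supported case, $\|\Gamma-{\rm Id}\|\leq C\,{\cal D}$ only yields $2\delta\geq K{\cal D}^4-C^2{\cal D}^2$, which is vacuous for small ${\cal D}$. Since $\|\Gamma-{\rm Id}\|$ and ${\cal D}$ are comparable at first order (think of $\mu=N(0,(1-\epsilon){\rm Id})$, where both are of order $\epsilon$ while $\delta\asymp\epsilon^2$), no estimate of $\|\Gamma-{\rm Id}\|^2$ in terms of ${\cal D}$ alone can be absorbed into a ${\cal D}^4$ term. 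The ${\rm I}(f)$ in the denominator is therefore not the principal obstacle, and the $P_t$-regularization does not address the real one (note also that the covariance of $P_tf\,d\gamma$ satisfies $\|\Gamma_t-{\rm Id}\|=\eu^{-2t}\|\Gamma-{\rm Id}\|$, which does not become small). Your fallback compactness argument fails for the standard reason in quantitative stability: from $\delta(f_k)/{\cal D}(\mu_k,\gamma)^4\to0$ one only gets $\delta(f_k)\to0$ and, at best, ${\cal D}(\mu_k,\gamma)\to0$ along the subsequence, which is perfectly consistent with the hypothesis; the contradiction never materializes in the regime where both quantities vanish.

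The missing idea is to control $\|\Gamma-{\rm Id}\|^2$ by the deficit $\delta(f)$ itself, not by ${\cal D}$. This is what the paper does through the MMSE representation of Proposition~\ref{prop.deficit}: with $Z_t={\rm Cov}(X\,|\,X_t)$, Jensen's inequality gives, for every unit vector $\alpha$,
$$
\E\Big(\big|Z_t-(1-\eu^{-2t}){\rm Id}\big|^2\Big)\,\geq\,\tfrac12\,\big\|\Gamma-{\rm Id}\big\|^2-2\,\eu^{-4t}-2\,\rho(t),
\qquad \rho(t)=\sup_{|\alpha|=1}\E\Big(\big[\E(\alpha\cdot X\,|\,X_t)\big]^2\Big)^2 ,
$$
and Lemma~\ref{lem.limit} shows $\rho(t)\to0$ as $t\to\infty$ --- this is precisely where the uniform integrability of $(\alpha\cdot x)^2\,d\mu$ enters. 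Choosing $t_0$ so that $2\eu^{-4t}+2\rho(t)\leq\frac14\|\Gamma-{\rm Id}\|^2$ for $t\geq t_0$ and integrating the tail of \eqref{eq.deficitzt} yields $\delta(f)\geq\frac{\eu^{-4t_0}}{16}\|\Gamma-{\rm Id}\|^2$, which combined with \eqref{eq.lowerbound} closes the proof; the non-explicit constant $c(f)$ is exactly the price of $t_0$. Your truncated-linear test functions relate $\|\Gamma-{\rm Id}\|$ to ${\cal D}$, which is the wrong direction for this argument.
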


\medskip

The distance, or rather measure of proximity in the sense of Stein, ${\cal D}(\mu, \gamma)$, which
is at the core of the present work, may be recast in terms of the Stein kernel
associated with a given distribution, and compared to its discrepancy
as emphasized in \cite {LNP15}. For a centered probability measure $\mu$,
let $\tau_\mu$ be a Stein kernel (matrix) of $\mu$
in the sense that for any smooth $\varphi : \R^n \to \R$,
$$
\int_{\R^n} x \varphi \, d\mu \, = \, \int_{\R^n} \tau_\mu \nabla \varphi \, d\mu
$$
(as vectors in $\R^n$). Then
$$
{\cal D} (\mu, \gamma) 
   \, = \, \sup_{\varphi \in {\cal B}}
    \bigg | \int _{\R^n} \big [ (\tau_\mu - {\rm Id}) \nabla \varphi \big ] d\mu \bigg | .
$$
Recalling the Stein discrepancy between $\mu $ and $\gamma$,
$$
{\rm S}\big ( \mu \, | \, \gamma) 
   \, = \, \bigg ( \int_{\R^n} | \tau_\mu - {\rm Id} |^2 d\mu \bigg)^{1/2}
$$
(where we recall that $|\cdot|$ stands for the Hilbert-Schmidt norm when applied to matrices),
it holds that
\beq \label {eq.discrepancy}
{\cal D} (\mu, \gamma)  \, \leq \,   {\rm S}\big ( \mu \, | \, \gamma) .
\eeq

For the matter of comparison, note from \cite {LNP15} that
$ {\rm W}_2(\mu, \gamma) \leq {\rm S}(\mu, \gamma)$.

As an additional link between the deficit and the Stein characterization, 
the recent \cite {LNP15} points out an improved form of the logarithmic
Sobolev inequality involving the Stein discrepancy $ {\rm S} = {\rm S}\big ( \mu \, | \, \gamma)$
as
$$
{\rm H}(f) \, \leq \, \frac {{\rm S}^2}{2} \, \log \bigg ( 1 + \frac {{\rm I}(f)}{{\rm S}^2} \bigg) .
$$
In terms of the deficit $\delta = \delta (f) = \frac {1}{2} \, {\rm I}(f) - {\rm H}(f)$,
$$
{\rm H}(f)  \, \leq \,  \frac {{\rm S}^2}{2} 
    \log \bigg (1 + \frac { 2 \, {\rm H}(f) + 2 \delta  }{{\rm S}^2} \bigg )
$$
so that if $r = \frac {2 \, {\rm H}(f)}{{\rm S}^2}$, then
$ r \leq \log \big ( 1 + r + \frac {2 \delta}{{\rm S}^2} \big)$, that is
$$
\frac {2 \delta}{{\rm S}^2} \, \geq \, e^r - 1 - r \, \geq \, \frac {r^2}{2} 
  \, = \, \frac {2\, {\rm H}(f)^2}{{\rm S}^4} \, . 
$$
Therefore
$$
\delta (f) \, \geq \, \frac {{\rm H}(f)^2}{{\rm S}^2} \, .
$$

Together with the transportation cost inequality \eqref {eq.talagrand}, we may
therefore state the following corollary,
close in spirit to \eqref {eq.bgrs}.

\begin{proposition}\label{p:stein}
Let $d\mu = f d\gamma $ centered on $\R^n$
with Stein kernel $\tau_\mu$ and associated discrepancy ${\rm S}\big ( \mu \, | \, \gamma)$. Then
\beq \label{eq.ts}
\delta (f) \, \geq \, \frac {{\rm W}_2(\mu, \gamma)^4}{4\, {\rm S}(\mu \, | \, \gamma)^2} \, .
\eeq
\end{proposition}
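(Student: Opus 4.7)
The proposition follows by a short concatenation of two estimates that are already assembled in the paragraphs immediately preceding its statement, so the plan is simply to carry out that concatenation carefully.

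First, I would take as an input the improved logarithmic Sobolev inequality involving the Stein discrepancy from \cite{LNP15}, namely
$$
{\rm H}(f) \, \leq \, \frac{{\rm S}^2}{2}\,\log\!\left(1+\frac{{\rm I}(f)}{{\rm S}^2}\right),
$$
together with the manipulation the authors perform just before the statement. Writing ${\rm I}(f) = 2{\rm H}(f) + 2\delta(f)$, setting $r = 2{\rm H}(f)/{\rm S}^2$, and using the elementary inequality $\mathrm{e}^r - 1 - r \geq r^2/2$ for $r \geq 0$, one obtains
$$
\delta(f) \, \geq \, \frac{{\rm H}(f)^2}{{\rm S}(\mu\,|\,\gamma)^2}.
$$
I would state this step explicitly, since it is the whole non-trivial input from the Stein-improved logarithmic Sobolev inequality.

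Second, I would invoke Talagrand's quadratic transportation cost inequality \eqref{eq.talagrand}, which asserts that $2{\rm H}(f) \geq {\rm W}_2(\mu,\gamma)^2$. Squaring gives ${\rm H}(f)^2 \geq \tfrac{1}{4}{\rm W}_2(\mu,\gamma)^4$, and substituting into the previous display yields
$$
\delta(f) \, \geq \, \frac{{\rm H}(f)^2}{{\rm S}(\mu\,|\,\gamma)^2}
  \, \geq \, \frac{{\rm W}_2(\mu,\gamma)^4}{4\,{\rm S}(\mu\,|\,\gamma)^2},
$$
which is the claimed bound \eqref{eq.ts}.

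There is essentially no obstacle: both ingredients are cited (or just proved) in the excerpt, and the argument is a one-line combination. The only minor point to be careful about is that the bound $\delta(f) \geq {\rm H}(f)^2/{\rm S}^2$ implicitly requires ${\rm S}<\infty$ (otherwise the right-hand side of \eqref{eq.ts} is trivially zero) and ${\rm I}(f)<\infty$ to even define the deficit; but these are natural standing assumptions in this work, so I would simply remark that outside these cases the inequality is vacuous.
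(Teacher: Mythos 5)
Your proof is correct and is exactly the argument the paper uses: the bound $\delta(f)\geq {\rm H}(f)^2/{\rm S}(\mu\,|\,\gamma)^2$ derived from the Stein-improved logarithmic Sobolev inequality of \cite{LNP15} via $\eu^r-1-r\geq r^2/2$, combined with Talagrand's inequality \eqref{eq.talagrand}. No gaps; the remark about the degenerate cases ${\rm S}=\infty$ or ${\rm I}(f)=\infty$ is a reasonable aside but not needed.
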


\begin{remark} In \cite[Theorem 3.2]{LNP15}, it is proved that
$$
{\rm W}_2(\mu, \gamma) \, \leq \,  {\rm S} \big (\mu \, | \, \gamma \big)
    \arccos \Big ( e^{-\frac{{\rm H}(f)}{{\rm S}^2(\mu \, | \, \gamma)}}\Big).
$$
Such a relation allows one to infer that 
$$
{\rm H}(f) \, \geq \,  {\rm S}^2 \, (\mu \, | \, \gamma \, )
\log\bigg (\frac{1}{\cos ({\rm W}_2(\mu, \gamma) \, {\rm S}(\mu \, | \, \gamma)^{-1} )} \bigg),
$$
so that the estimate \eqref{eq.ts} can be slightly improved as 
\beq \label {eq.tal}
\delta(f) \, \geq \,  {\rm S} \big (\mu \, | \, \gamma \big)^2\,
      \log\bigg ( \frac{1}{\cos({\rm W}_2(\mu, \gamma) \, {\rm S}(\mu \, | \, \gamma)^{-1})} \bigg)^2.
\eeq
Notice that, in view of ${\rm W}_2(\mu, \gamma) \leq  {\rm S}(\mu \, | \, \gamma)$,
one has that
$$
\cos(1) \, \leq \, \cos\big ({\rm W}_2(\mu, \gamma) \, 
     {\rm S}\big(\mu \, | \, \gamma \big )^{-1}\big ) \, \leq \,  1.
$$
\end{remark}

\bigskip

The paper is organized as follows. In the next Section~\ref {sec.2}, we describe
properties of the functional ${\cal D}(\mu, \gamma)$, and actually present
an improved form using resolvents of the Ornstein-Uhlenbeck semigroup.
Section~\ref {sec.3} provides the crucial information theoretic tools to analyze
the deficit in terms of ${\cal D}(\mu, \gamma)$, and on which the proof 
of Theorem~\ref {thm.main1} relies. Theorems~\ref {thm.main1} and
\ref {thm.main2} are then established in Sections~\ref {sec.4} and \ref {sec.5} respectively.
The final section is devoted to an alternate proof of the lower bound \eqref {eq.bgrs}
based on the tools of Section~\ref {sec.3}.

\section {Properties of ${\cal D}(\mu, \gamma)$} \label {sec.2}

The Stein functional naturally arising in the proof of
Theorem~\ref {thm.main1} and \ref {thm.main2} will actually be given by
$$
{\widetilde {\cal D}} (\mu, \gamma) 
   \, = \, \sup_{\varphi \in {\cal R}} \bigg | \int _{\R^n} \big [ x \varphi - \nabla \varphi \big ] d\mu \bigg|
$$
where ${\cal R} $ is the class of the resolvents (for the Ornstein-Uhlenbeck
semigroup ${(P_t)}_{t \geq 0}$)
$$
\varphi \, = \, R \psi  \, = \, 4 \int_0^\infty \eu^{-4t} P_t \psi \, dt
$$
with $\psi : \R^n \to \R $ (smooth, for example $C^1$) such that
$\int_{\R^n} \psi^2 P_t f d\gamma \leq 1$ for every $t \geq 0$. The value $4$ has no
particular meaning.

It is in particular the purpose of this section to compare ${\widetilde {\cal D}}$ and
${\cal D}$. Before, we collect some general informations on
${\widetilde {\cal D}} (\mu, \gamma) $. Recall that we assume throughout the investigation
that the density $f$ is smooth and that
$  \int_{\R^n} |x|^2 d\mu = \int_{\R^n} |x|^2 f d\gamma < \infty$.
Recall also that the Ornstein-Uhlenbeck semigroup ${(P_t)}_{t \geq 0}$ described
by the integral representation \eqref {eq.ou} is invariant and symmetric with respect to $\gamma$.
Its infinitesimal generator ${\rm L} = \Delta - x \cdot \nabla$ satisfies the integration
by parts formula, for smooth functions $g, h : \R^n \to \R$,
$$
\int_{\R^n} g \, {\rm L} h \, d\gamma \, = \, - \int_{\R^n} \nabla g \cdot \nabla h \, d\gamma.
$$

It should be noted first that the integrals
$ \int _{\R^n}  [ x \varphi - \nabla \varphi  ] d\mu$
in the definition of ${\widetilde {\cal D}} (\mu, \gamma) $ are well-defined. If
$\varphi = R\psi $ with
$\int_{\R^n} \psi^2 P_t f d\gamma  = \int_{\R^n} P_t (\psi^2) d\mu \leq 1$ for every $t \geq 0$,
\beqs \begin {split}
\int _{\R^n} | x \varphi | d\mu
   & \, \leq \,  4 \int_0^\infty \eu^{-4t} \int_{\R^n} | x P_t \psi | d \mu \, dt \\
    & \, \leq \,  4  \bigg ( \int_{\R^n} |x|^2 d\mu \bigg)^{1/2}
   \int_0^\infty \eu^{-4t} \bigg (\int_{\R^n} (P_t \psi )^2 d \mu \bigg)^{1/2} \, dt \\
      & \, \leq \,    \bigg ( \int_{\R^n} |x|^2 d\mu \bigg)^{1/2} < \infty.
\end {split} \eeqs
Next, after integration by parts in the integral representation \eqref {eq.ou} of $P_t$,
for every $x \in \R^n$,
\beqs \begin {split}
| \nabla P_t \psi |^2 (x)
    & \, = \, \sum_{i=1}^n (\partial_i P_t \psi )^2 (x) \\
       & \, = \,  \eu^{-2t} \sum_{i=1}^n
      \bigg ( \int_{\R^n} \partial_i \psi \big ( \eu^{-t} x + \sqrt {1 - \eu^{-2t}} \, y \big ) 
                     d\gamma (y) \bigg)^2  \\
   & \, = \,  \frac {\eu^{-2t}}{1- \eu^{-2t}} \sum_{i=1}^n
      \bigg ( \int_{\R^n} y_i \, \psi \big ( \eu^{-t} x + \sqrt {1 - \eu^{-2t}} \, y \big ) 
              d\gamma (y) \bigg)^2  \\
      &\, \leq \, \frac {\eu^{-2t}}{1- \eu^{-2t}} \, P_t (\psi^2) (x)  \\
\end {split} \eeqs
so that
$$
\int_{\R^n} | \nabla \varphi | d\mu
  \, \leq  \, 4 \int_0^\infty \eu^{-4t} \int_{\R^n} |\nabla P_t \psi | d\mu \, dt
    \, \leq \, \, 4 \int_0^\infty  \frac {\eu^{-5t}}{\sqrt {1 - \eu^{-2t}}} \, dt  \, \leq \, 4.
$$

The family $\mathcal {R}$ is a determining class in the sense that
whenever $ {\widetilde {\cal D}} (\mu, \gamma)  = 0$, then $\mu = \gamma$. To check this claim, choose
$\psi (x) = \eu^{i \lambda \cdot x}$, $\lambda \in \R^n$ (rather their real and imaginary parts), so that
$$
\varphi (x) \, = \, R\psi  (x) \, = \, 4 \, \eu^{- |\lambda|^2/2}
    \int_0^1 u^3 \, \eu^{i u\lambda \cdot x + |\lambda|^2u^2/2} du, \quad x \in \R^n.
$$
With $F$ the Fourier transform of $\mu$,
$$
\int_{\R^n} \big [x \varphi - \nabla \varphi \big ] d\mu 
  \, = \, -4  i \, \eu^{- |\lambda|^2/2} 
      \int_0^1 u^3 \, \eu^{  |\lambda|^2u^2/2} \big [  \nabla F(u \lambda )
         + u\lambda F(u\lambda) \big ] du.
$$
If the left-hand side of this identity is zero, after the change of $u$ into $\frac {u}{|\lambda |}$,
$$
      \int_0^{\rho} u^3 \, \eu^{ u^2/2} \big [ \nabla F (u \theta )
         + u\lambda F (u\theta) \big ] du \, = \, 0
$$
where $\rho = |\lambda|$ and $\theta = \frac {\lambda}{|\lambda|}$, $\lambda \not=0$.
This relation holding true for any $\rho >0$ and $\theta \in \sph^{n-1}$, it follows that
$ \nabla F (w ) + w F (w) = 0$ for any $w \in \R^n$, and thus $F $ is the Fourier transform of 
the standard normal $\gamma$ on $\R^n$. 

\medskip

The next proposition is the announced comparison between
${\cal D}(\mu, \gamma)$  and ${\widetilde {\cal D}} (\mu, \gamma)$.

\begin {proposition} \label {prop.alternate}
Let $d\mu = f d\gamma$. Then
$$
{\cal D} (\mu, \gamma ) \, \leq \, 2 \big ( 1 + {\rm I}(f) \big)^{1/2} \,
     {\widetilde {\cal D}} (\mu, \gamma) .
$$
\end {proposition}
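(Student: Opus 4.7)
The idea is to realize every $\varphi\in{\cal B}$ as a member of ${\cal R}$ after dividing by an explicit constant. Since $R=4(4-{\rm L})^{-1}$, the natural choice is
$$
\psi \, = \, \varphi - \tfrac{1}{4}\, {\rm L}\varphi,
$$
for which $R\psi=\varphi$. If one can establish the uniform bound
$$
\sup_{t\geq 0}\int_{\R^n}\psi^2\, P_tf\, d\gamma \, \leq \, 4\bigl(1+{\rm I}(f)\bigr),
$$
then with $C=2(1+{\rm I}(f))^{1/2}$ the function $\psi/C$ satisfies the admissibility constraint defining ${\cal R}$, so $\varphi/C=R(\psi/C)\in{\cal R}$. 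This gives $|\int[x\varphi-\nabla\varphi]\, d\mu|\leq C\,{\widetilde{\cal D}}(\mu,\gamma)$, and taking the supremum over $\varphi\in{\cal B}$ yields the proposition.

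The work therefore reduces to controlling $\int\psi^2 g\, d\gamma$ uniformly in $t$, where $g=P_tf$. Expanding
$$
\psi^2 \, = \, \varphi^2 - \tfrac{1}{2}\varphi\,{\rm L}\varphi + \tfrac{1}{16}({\rm L}\varphi)^2
$$
yields three contributions. The first is at most $\|\varphi\|_\infty^2\leq 1$. For the cross term, a single Gaussian integration by parts gives $\tfrac{1}{2}\int g|\nabla\varphi|^2 d\gamma + \tfrac{1}{2}\int\varphi\,\nabla\varphi\cdot\nabla g\, d\gamma$, whose absolute value is bounded by $\tfrac{1}{2}(1+{\rm I}(g)^{1/2})$ using $\|\varphi\|_\infty,\|\nabla\varphi\|_\infty\leq 1$ and the Cauchy--Schwarz inequality $\int|\nabla g|\, d\gamma\leq{\rm I}(g)^{1/2}$.

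The delicate piece is $A:=\int({\rm L}\varphi)^2 g\, d\gamma$. Using the commutation relation $\nabla{\rm L}\varphi={\rm L}\nabla\varphi-\nabla\varphi$ (applied componentwise) together with two integrations by parts, one obtains
$$
A \, = \, \int g\bigl(|\nabla\varphi|^2+|{\rm Hess}\varphi|^2\bigr) d\gamma + \int\nabla g\cdot\bigl({\rm Hess}\varphi\,\nabla\varphi\bigr) d\gamma - \int{\rm L}\varphi\,\nabla g\cdot\nabla\varphi\, d\gamma .
$$
The first three integrals are controlled by $1$, $1$ and ${\rm I}(g)^{1/2}$ respectively. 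For the last, Cauchy--Schwarz with respect to the weighted measure $g\, d\gamma$ yields
$$
\bigg| \int {\rm L}\varphi\, \nabla g\cdot\nabla\varphi\, d\gamma \bigg| \, \leq \, \sqrt{A}\, {\rm I}(g)^{1/2},
$$
since $|\nabla g\cdot\nabla\varphi|^2/g\leq|\nabla g|^2/g$ by $\|\nabla\varphi\|_\infty\leq 1$. An AM--GM step closes the ensuing quadratic inequality in $\sqrt{A}$ to give $A\leq 5(1+{\rm I}(g))$, and the monotonicity ${\rm I}(P_tf)\leq{\rm I}(f)$ of Fisher information along the Ornstein--Uhlenbeck semigroup upgrades this to $A\leq 5(1+{\rm I}(f))$, uniformly in $t$. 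Summing the three contributions produces the required uniform bound, with a constant comfortably below $4$.

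The main obstacle is the estimation of $\int{\rm L}\varphi\,\nabla g\cdot\nabla\varphi\, d\gamma$: because ${\rm L}\varphi=\Delta\varphi-x\cdot\nabla\varphi$ involves the unbounded multiplier $x\cdot\nabla\varphi$, no pointwise bound on ${\rm L}\varphi$ is available, so the estimate must be closed by absorbing ${\rm L}\varphi$ back into $A$ via Cauchy--Schwarz against $g\, d\gamma$. It is precisely at this step that the Fisher information ${\rm I}(f)$ enters the final constant.
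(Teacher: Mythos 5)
Your proposal is correct and follows essentially the same route as the paper: the same resolvent inversion $\psi=\varphi-\tfrac14\,{\rm L}\varphi$, the same three-term expansion of $\int\psi^2 P_tf\,d\gamma$, and the same treatment of $\int({\rm L}\varphi)^2P_tf\,d\gamma$ via integration by parts and the $\Gamma_2$/commutation identity, closing the estimate by absorbing ${\rm L}\varphi$ back through Cauchy--Schwarz against $P_tf\,d\gamma$ exactly as the paper does with its pointwise AM--GM step. The only differences are cosmetic bookkeeping of constants, and your final bound indeed lands below the required $4(1+{\rm I}(f))$.
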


\begin {proof} Given $\varphi \in \mathcal{B}$, it is straightforward to check
that $\varphi = R\psi$ where 
$$
\psi  \, = \,  -\frac14 \, ({\rm L} - 4\,{\rm Id}) \varphi.
$$
Hence the condition $\int_{\R^n} \psi^2 P_t f d\gamma  \leq 1$ for every $t \geq 0$
in the definition of $  {\widetilde {\cal D}} (\mu, \gamma)$  turns into
$$
\int_{\R^n} ({\rm L}\varphi - 4 \varphi)^2 P_t f \, d\gamma \, \leq \, 16.
$$
Developing the square, we examine successively the three terms under the boundedness
assumptions on $\varphi$ and its derivatives.

The integral
$ \int_{\R^n} \varphi^2 P_t f d\gamma$ may be controlled by a uniform bound on $\varphi$.
To handle $$ \int_{\R^n}  {\rm L}\varphi \, \varphi P_t f d\gamma,$$ write by integration by
parts that
\beqs \begin {split}
\int_{\R^n}  {\rm L}\varphi \, \varphi P_t f d\gamma
  &  \, = \, - \int_{\R^n}  \nabla \varphi \cdot \nabla( \varphi P_t f) d\gamma \\
  & \, = \, - \int_{\R^n}  |\nabla \varphi|^2 P_t f \, d\gamma
      - \int_{\R^n}   \varphi \nabla \varphi \cdot \nabla P_t f \, d\gamma  .\\
\end {split} \eeqs
Now, if ${\| \varphi \|}_\infty \leq 1$ and ${\| \nabla \varphi \|}_\infty \leq 1$,
by the Cauchy-Schwarz inequality and the exponential decay recalled in \eqref {eq.fisherdecay} below,
$$
\bigg | \int_{\R^n}   \varphi \nabla \varphi \cdot \nabla P_t f \, d\gamma \bigg |
   \, \leq \, \int_{\R^n}   | \nabla P_t f | d\gamma 
   \, \leq \, \sqrt { {\rm I} (P_t f)}  \, \leq \, \eu^{-t}  \sqrt {{\rm I} (f)}.
$$
As a consequence, for every $t \geq 0$,
$$
\bigg | \int_{\R^n}  {\rm L}\varphi \, \varphi P_t f \, d\gamma \bigg |
   \, \leq \, 1 + \sqrt {{\rm I}(f)} \,  .
$$
Finally, again by integration by parts,
\beqs \begin {split}
    \int_{\R^n}  ({\rm L}\varphi )^2 P_t f \, d\gamma
         & \, = \,  - \int_{\R^n} \nabla \varphi \cdot \nabla ( {\rm L}\varphi  \, P_t f) d\gamma \\
        & \, = \,  - \int_{\R^n}  {\rm L}\varphi \, \nabla \varphi \cdot \nabla  P_t f \, d\gamma 
                 - \int_{\R^n}  \nabla \varphi \cdot \nabla {\rm L}\varphi \,   P_t f \, d\gamma  \\
\end {split} \eeqs
Under ${\| \nabla \varphi \|}_\infty \leq 1$,
$$
- \int_{\R^n}  {\rm L}\varphi \, \nabla \varphi \cdot \nabla  P_t f \, d\gamma 
 \, \leq \,  \int_{\R^n}  |{\rm L}\varphi | | \nabla  P_t f | d\gamma
   \, \leq \, \frac {1}{2} \int_{\R^n}  ({\rm L}\varphi )^2  P_t f  \, d\gamma
        + \frac12 \, {\rm I}(P_t f) .
$$
Therefore
$$
 \int_{\R^n}  ({\rm L}\varphi )^2 P_t f d\gamma
    \, \leq \,  - 2 \int_{\R^n}  \nabla \varphi \cdot \nabla {\rm L}\varphi \,   P_t f \, d\gamma
     +  {\rm I}(P_t f) .
$$
Now
$$
- 2 \, \nabla \varphi \cdot \nabla {\rm L}\varphi 
  \, = \, 2 \, \Gamma_2(\varphi) - {\rm L} \big ( | \nabla \varphi |^2 \big)
$$
where
$\Gamma_2(\varphi) =  | {\rm Hess}(\varphi) |^2 + | \nabla \varphi |^2    \leq  2 $.
Once more by integration by parts,
$$
\int_{\R^n}  {\rm L} \big ( | \nabla \varphi |^2 \big)  P_t f d\gamma
  \, = \, -  \int_{\R^n}  \nabla \big ( | \nabla \varphi |^2 \big) \cdot \nabla P_t f d\gamma
$$
and 
$$
\big | \nabla \big ( | \nabla \varphi |^2 \big) \big | 
    \, \leq \, 2 | \nabla \varphi | \,  \big | {\rm Hess}(\varphi) \big | \, \leq \, 2. $$
Altogether, if follows that
$$
 \int_{\R^n}  ({\rm L}\varphi )^2 P_t f d\gamma
      \, \leq \, 4 + 2 \sqrt {{\rm I}(f) } +  \, {\rm I}(f).
$$

As a consequence of the preceding three upper bounds, and using that
$\sqrt{I(f)}\leq\frac12 (1+I(f))$, for any $t \geq 0$,
$$
\int_{\R^n} ({\rm L}\varphi - 4 \varphi)^2 P_t f d\gamma 
     \, \leq \, 33 + 6\, {\rm I}(f) \, \leq \, 64\big(1+I(f)\big).
$$
As a result, $\varphi\in\mathcal{B}$ implies that $\varphi\in\,2\sqrt{1+I(f)}\,\mathcal{R}$, and
the proof of the proposition is completed by homogeneity.
\end {proof}

\section {Information theoretic representation} \label {sec.3}

This section develops the information tools towards a suitable expression
for the deficit in the semigroup formulation \eqref {eq.representation}.

Recall the Ornstein-Uhlenbeck semigroup ${(P_t)}_{t \geq 0}$ from
\eqref {eq.ou}. Note that, as vector valued functions,
provided $f : \R^n \to \R$ is smooth, $\nabla P_t f = \eu^{-t} P_t (\nabla f)$.
It is immediate on the integral representation \eqref {eq.ou} to observe that, by integration by parts,
for every $t \geq 0$, as vectors in $\R^n$,
\beq \label {eq.pt}
P_t (xf) \,= \, \eu^{-t} x P_t f + (1- \eu^{-2t}) P_t (\nabla f) 
     \, = \  \eu^{-t} x P_t f + 2 \, {\rm sh} (t) \nabla P_t f . 
\eeq
From \eqref {eq.pt} is deduced an alternate description of the Fisher information 
$$ 
 {\rm I} (P_t f) \, = \, \int_{\R^n} \frac{|\nabla P_t f|^2}{P_t f} \, d\gamma 
$$
along the semigroup as
$$ 
4 \,{\rm sh}^2(t) \, {\rm I} (P_t f) 
   \, = \,  \int_{\R^n} \frac{ |P_t (xf) - \eu^{-t} x P_t f |^2}{P_t f} \, d\gamma .
$$

Given a probability density $f$ with respect to $\gamma $,
let $X$ be a random vector with distribution $d \mu = fd\gamma $.
Let furthermore $N$ be independent
with law $\gamma$, and set, for every $t \geq 0$,
$$
X_t \, = \,  \eu^{-t} X + {\sqrt {1 - \eu^{-2t}}} \, N .
$$
Note that $X_t$ has distribution $P_t f d\gamma$ since
for any bounded measurable $\varphi : \R^n \to \R$,
\beqs \begin {split}
 \E \big ( \varphi (X_t) \big ) 
   & \, = \,  \int_{\R^n} \! \int_{\R^n} \varphi \big ( \eu^{-t} x + \sqrt {1 - \eu^{-2t}} \, y \big)
           f(x) d\gamma (x) d\gamma (y) \\
    &  \, = \,  \int_{\R^n} P_t \varphi  \, f \, d\gamma  \, = \, \int_{\R^n}  \varphi  \, P_t f \, d\gamma .  \\
\end {split} \eeqs

The next observation is that if $u = \frac{P_t (xf)}{P_t f} : \R^n \to \R^n$, then
$$ 
 \E \big (X \, | \, X_t \big )  \, = \,  u(X_t).
$$
Indeed, for any bounded measurable $\varphi : \R^n \to \R$,
$$ 
\E \Big ( \varphi (X_t) \, \E \big (X \, | \, X_t \big ) \Big ) 
    \, = \,  \E \big ( X  \varphi ( X_t) \big ) 
   \, = \, \int_{\R^n} x P_t \varphi  f \, d\gamma  
    \, = \, \int_{\R^n} \varphi  P_t (x f)  d\gamma
$$
while
$$ 
\E \big ( \varphi (X_t) u(X_t) \big ) 
   \, = \, \int_{\R^n} P_t (\varphi u)  f \, d\gamma 
    \, = \, \int_{\R^n} \varphi \, u P_t f  \, d\gamma  
 $$
from which the announced claim follows.

As a consequence, for every $t \geq 0$
\beq  \label {eq.fishermmse}
4 \,{\rm sh}^2(t) \, {\rm I} (P_t f) 
    \, = \,  \int_{\R^n} P_t f \, \bigg | \frac {P_t (xf)}{P_t f} - \eu^{-t} x \bigg |^2 \, d\gamma 
    \, = \, \E \Big (  \big | \E \big ( X \, | \, X_t \big) - \eu^{-t} X_t \big |^2 \Big).
\eeq

In this Ornstein-Uhlenbeck context, the representation \eqref {eq.fishermmse} of the Fisher
information is the analogue of the Minimal Mean-Square Error
(MMSE) emphasized in \cite {GSV05,GWSV11,L15,NPS14}.
The proximity with the linear estimator $ X_t^\ell = \eu^{-t} X_t $ will turn out essential
in the further developments. In particular, \eqref {eq.fishermmse} rewrites as
$$
4 \,{\rm sh}^2(t) \, {\rm I} (P_t f) 
    \, = \, \E \Big (  \big | \E \big ( X - X_t^\ell \, | \, X_t \big)  \big |^2 \Big), \quad t \geq 0.
$$
At this stage, it might be of interest to point out that
if $  \E  (X \, | \, X_t ) = X_t^\ell$ (for some $t>0$), then $X$ must be standard normal.
Indeed, under this assumption, for any smooth $\varphi : \R^n \to \R$,
$$
\E \big ( \varphi (X_t) X \big ) \, = \, \E \big ( \varphi (X_t) X_t^\ell \big ),
$$
that is
$$
(1-\eu^{-2t}) \, \E \big ( X \, \varphi (X_t) \big ) \, = \, 
   \eu^{-t} \, \sqrt {1 - \eu^{-2t}} \; \E \big ( N \, \varphi (X_t) \big ) .
$$
After integration by parts with respect to $N$,
$$
 \, \E \big ( X \, \varphi (X_t) \big ) \, = \, 
            \eu^{-t}  \, \E \big ( \nabla \varphi (X_t) \big ) .
$$
For $\varphi (x) = \eu^{i \lambda \cdot x}$, $\lambda \in \R^n$, this amounts
again to the differential equation $ \nabla F = -  \lambda F $ for the
Fourier transform $F (\lambda) = \E  (  \eu^{i \lambda \cdot X} ) $ of $X$.

\medskip

We next investigate the analogue of \eqref {eq.fishermmse}
for the time derivative of the Fisher information
$ {\rm I} (P_t f) $. Actually, this derivative is at the root of the representation formula
\eqref {eq.representation} that we recall here
\beq \label {eq.representation2}
 {\rm H} (f)  \, = \,  \frac {1}{2} \, {\rm I} (f) - 
      \int_0^\infty \! \int_{\R^n} P_t f  \, \big | {\rm Hess} (\log P_t f) \big |^2 d\gamma \, dt.
\eeq
Indeed, de Bruijn's formula first expresses that
$$
 \frac{d}{dt} \, {\rm H} (P_t f) \, = \, - \int_{\R^n} P_t f | \nabla \log P_t f|^2 d\gamma
    \, = \, - \,  {\rm I} (P_t f)
$$
so that
$$
{\rm H}(f)  \, = \, \int_0^\infty   {\rm I} (P_t f) dt.
$$  
At the second order, following the $\Gamma$-calculus as exposed e.g.
in \cite {B94,BGL14},
\beq \begin {split} \label {eq.diffeq}
 \frac{d}{dt} \, {\rm I} (P_t f)  
       & \,= \, - 2 \int_{\R^n} P_t f \, \Gamma _2(\log P_t f) d\gamma \\
       & \, = \, - 2 \int_{\R^n} P_t f \Big [ \big | {\rm Hess} (\log P_t f) \big |^2
               + \big | \nabla (\log P_t f) \big |^2 \Big ] d\gamma   \\
        & \, = \, - 2 \int_{\R^n} P_t f  \, \big | {\rm Hess} (\log P_t f) \big |^2 d\gamma 
                 - 2 \, {\rm I}(P_t f) .   \\
\end {split} \eeq
Note, as is classical, that this differential equation implies the exponential decay of the
Fisher information
\beq \label {eq.fisherdecay}
{\rm I} (P_t f) \, \leq \, \eu^{-2t} \, {\rm I} ( f) , \quad t \geq 0.
\eeq
By integration by parts, it follows from \eqref {eq.diffeq} that
$$
{\rm H}(f)  \, = \, \int_0^\infty  \! \eu^{-2t} \big ( \eu^{2t} \, {\rm I} (P_t f) \big )dt
    \, = \, \frac {1}{2} \, {\rm I}(f) 
       + \frac {1}{2} \int_0^\infty  \eu^{-2t} \, \frac{d}{dt} \big ( \eu^{2t} \, {\rm I} (P_t f) \big )dt
$$
and hence \eqref {eq.representation2}. 

Accordingly, in the study of the deficit $\delta (f) = \frac {1}{2} \, {\rm I}(f) - {\rm H}(f)$,
we are therefore interested into
$$  
\int_{\R^n} P_t f \,  \big | {\rm Hess} (\log P_t f) \big |^2 d\gamma 
    \, = \,  \int_{\R^n} P_t f \,\bigg | \frac{ {\rm Hess} (P_t f)}{P_t f}
        - \frac{{\nabla P_t f \otimes \nabla P_t f }}{(P_t f)^2 } \bigg|^2 d\gamma  . 
$$
We analyze this expression as the Fisher information in \eqref {eq.fishermmse}.
Taking partial derivative $\partial _j$ in \eqref {eq.pt} first yields that
$$ 
 \partial _j P_t (x_i f) \, = \,  \eu^{-t} \delta_{ij} P_t f + 
   \eu^{-t} x_i \partial _j P_t f + 2\, {\rm sh}(t)  \partial _{ij} P_t f 
$$
for all $i, j = 1, \ldots, n$. After a further use of \eqref {eq.pt},
$$  
2\, {\rm sh}(t) \partial _j P_t (x_i f) 
  \, = \,  (1 - \eu^{-2t}) \delta_{ij} P_t f + \eu^{-t} x_i P_t (x_j f) - \eu^{-2t} x_i x_j P_t f
    + 4\, {\rm sh}^2(t)  \partial _{ij} P_t f .
$$
Applying then \eqref {eq.pt} one more time but to $x_i f$ for every $i$, we finally get that
$$ 
4\, {\rm sh}^2(t)  \partial _{ij} P_t f \, = \,   P_t (x_i x_j f)   - (1 - \eu^{-2t}) \delta_{ij} P_t f
  - \eu^{-t} \big [ x_i P_t (x_j f) + x_j P_t (x_i f) \big] + \eu^{-2t} x_i x_j P_t f .
$$
  
On the other hand, always from \eqref {eq.pt}, for all $i, j = 1, \ldots, n$,
$$ 
4\, {\rm sh}^2(t) \partial _i P_t f \partial _j P_t f  \, = \, P_t (x_i f) P_t (x_j f) 
   - \eu^{-t} \big [ x_i P_t (x_j f) +  x_j P_t (x_i f) \big ] P_t f + \eu^{-2t} x_i x_j (P_t f )^2 .
$$
In compact notation, it follows that
$$
4\, {\rm sh}^2(t) \bigg [ \frac{ {\rm Hess} (P_t f)}{P_t f}
        - \frac {\nabla P_t f \otimes \nabla P_t f }{(P_t f)^2 } \bigg ]
        \, = \, \frac {P_t (x \otimes x \, f)}{P_t f}   -  \frac {P_t(xf)}{P_t f} \otimes \frac {P_t(xf)}{P_t f} 
         - (1 - \eu^{-2t}) \, {\rm Id} .
$$
         
Recall that if $u = \frac {P_t (xf)}{P_t f} : \R^n \to \R^n$, then
$  \E \big (X \, | \, X_t \big )  = u(X_t)$ (as vectors). Exactly in the same way,
if $ v = \frac {P_t (x \otimes x \, f)}{P_t f} $, then
$$ 
\E \big (X \otimes X \, | \, X_t \big )  \,=\,  v(X_t)
$$
as $n \times n $ matrices. Hence (recall $\E (|X|^2) < \infty$), setting
\beq \label {eq.zt}
Z_t \, = \,  {\rm Cov} \big ( X \, | \, X_t \big) \, = \,
    \E \big (X \otimes X \, | \, X_t \big ) - \E \big (X \, | \, X_t \big ) \otimes \E \big (X \, | \, X_t \big ) ,
\eeq
it holds
$$
16 \, {\rm sh}^4(t)  \int_{\R^n} P_t f  \,  \big | {\rm Hess} (\log P_t f) \big |^2 d\gamma 
    \, = \,  \E \Big ( \big | Z_t - (1 - \eu^{-2t}){\rm Id} \big |^2 \Big ).
$$

From \eqref {eq.representation2}, we may therefore emphasize
the following identity which will be the cornerstone for the analysis of the deficit.

\begin {proposition} \label {prop.deficit}
Under the preceding notation,
\beq \label {eq.deficitzt}
\delta (f) \, = \,
    \int_0^\infty  \frac {1}{16 \, {\rm sh}^4(t)} \,
     \E \Big ( \big | Z_t - (1 - \eu^{-2t}){\rm Id} \big |^2 \Big ) dt.
\eeq
\end {proposition}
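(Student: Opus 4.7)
The statement is essentially a compilation of the ingredients assembled just above, so my proof plan is to glue them together carefully rather than to introduce new machinery.

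I would start from the semigroup representation \eqref{eq.representation2}, which immediately yields
\beqs
\delta(f) \, = \, \frac{1}{2}\,{\rm I}(f) - {\rm H}(f) \, = \, \int_0^\infty \! \int_{\R^n} P_t f \, \big|{\rm Hess}(\log P_t f)\big|^2 \, d\gamma \, dt.
\eeqs
So the whole matter reduces to rewriting the inner integral in terms of the conditional covariance $Z_t$.

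The next step is to use the matrix identity derived right before the statement, namely
\beqs
4\,{\rm sh}^2(t) \bigg[ \frac{{\rm Hess}(P_t f)}{P_t f} - \frac{\nabla P_t f \otimes \nabla P_t f}{(P_t f)^2} \bigg] \, = \, \frac{P_t(x \otimes x \, f)}{P_t f} - \frac{P_t(xf)}{P_t f} \otimes \frac{P_t(xf)}{P_t f} - (1-\eu^{-2t})\,{\rm Id},
\eeqs
together with the elementary formula ${\rm Hess}(\log P_t f) = \tfrac{{\rm Hess}(P_t f)}{P_t f} - \tfrac{\nabla P_t f \otimes \nabla P_t f}{(P_t f)^2}$. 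Squaring both sides in Hilbert-Schmidt norm, multiplying by $P_t f$, and integrating against $\gamma$, I get
\beqs
16\,{\rm sh}^4(t) \int_{\R^n} P_t f \, \big|{\rm Hess}(\log P_t f)\big|^2 d\gamma \, = \, \int_{\R^n} \bigg| \frac{P_t(x\otimes x\,f)}{P_tf} - \frac{P_t(xf)}{P_tf}\otimes\frac{P_t(xf)}{P_tf} - (1-\eu^{-2t})\,{\rm Id} \bigg|^2 P_t f \, d\gamma.
\eeqs

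To convert this into an expectation I use the probabilistic representation already proved above: with $X \sim f d\gamma$ independent of $N \sim \gamma$ and $X_t = \eu^{-t}X + \sqrt{1-\eu^{-2t}}\,N$ (whose law is $P_t f \, d\gamma$), one has $\E(X \mid X_t) = \tfrac{P_t(xf)}{P_t f}(X_t)$ and, by the same argument applied componentwise to $x_i x_j$, $\E(X \otimes X \mid X_t) = \tfrac{P_t(x \otimes x \, f)}{P_t f}(X_t)$. (Integrability of the right-hand side is ensured by the standing assumption $\int_{\R^n}|x|^2 d\mu < \infty$.) Hence, evaluating the integrand at $X_t$ and recalling the definition \eqref{eq.zt} of $Z_t$, the right-hand side above equals $\E\big(|Z_t - (1-\eu^{-2t}){\rm Id}|^2\big)$.

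Combining the two displays and inserting the result into the integral representation of $\delta(f)$ gives the claimed identity \eqref{eq.deficitzt}. There is no real obstacle: the only point that deserves a one-line justification is the applicability of Fubini (to swap the $t$-integral with the expectation when writing the final formula) and the identification of conditional moments of the matrix-valued function $x \otimes x$, both of which are standard given $\E(|X|^2) < \infty$.
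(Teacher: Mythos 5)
Your proposal is correct and follows exactly the route the paper takes: the identity \eqref{eq.representation2} reduces the claim to evaluating $\int_{\R^n} P_t f\,|{\rm Hess}(\log P_t f)|^2 d\gamma$, and the matrix identity involving $4\,{\rm sh}^2(t)$ together with the conditional-expectation representations of $\frac{P_t(xf)}{P_tf}$ and $\frac{P_t(x\otimes x f)}{P_tf}$ turns that integral into $\frac{1}{16\,{\rm sh}^4(t)}\,\E\big(|Z_t-(1-\eu^{-2t}){\rm Id}|^2\big)$. This is precisely the computation carried out in Section~\ref{sec.3} immediately before the statement, so nothing further is needed.
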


\section {Proof of Theorem~\ref {thm.main1}} \label {sec.4}

On the basis of Proposition~\ref {prop.deficit}, we address in this section the proof of
Theorem~\ref {thm.main1}. According to Proposition~\ref {prop.alternate},
we actually establish a lower bound on the deficit
in terms of the functional ${\widetilde {\cal D}}(\mu_b, \gamma)$.

\begin {theorem} \label {thm.main1bis}
Let $f$ be a probability density on $\R^n$ and let $d\mu = f d\gamma$.
Assume that $\mu$ has barycenter $b$ and covariance matrix
$\Gamma \leq {\rm Id} $ (in the sense of symmetric matrices). Then, 
$$
\delta (f) \, \geq \,  \frac {1}{4} \,  {\widetilde  {\cal D}} (\mu_b, \gamma)^4 .
$$
\end {theorem}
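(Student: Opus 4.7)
The strategy is to represent the Stein functional as a time integral along the Ornstein--Uhlenbeck semigroup whose integrand matches the one in Proposition~\ref{prop.deficit}, and then to close the estimate via Cauchy--Schwarz.

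First, reduce to the centered case. A direct computation gives ${\rm H}(f_b) = {\rm H}(f) - |b|^2/2$ and ${\rm I}(f_b) = {\rm I}(f) - |b|^2$, so $\delta(f_b) = \delta(f)$, while $\mu_b$ is centered with the same covariance $\Gamma$. We may therefore assume $b=0$ and prove $\delta(f) \geq \tfrac14 \widetilde{\mathcal D}(\mu,\gamma)^4$ for a centered $\mu$ with $\Gamma \leq {\rm Id}$.

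Next, fix $\varphi = R\psi \in \mathcal R$. Starting from the Gaussian integration by parts $\int (x\varphi - \nabla\varphi)\,d\mu = \int \varphi\,\nabla f\,d\gamma$ and inserting $\varphi = 4\int_0^\infty e^{-4t}P_t\psi\,dt$, the self-adjointness of $P_t$ with respect to $\gamma$ together with the intertwining $\nabla P_tf = e^{-t}P_t\nabla f$ (equivalently, the resolvent identity $(4-L)^{-1}\nabla = \nabla(3-L)^{-1}$) yields
\[
\int(x\varphi - \nabla\varphi)\,d\mu \;=\; 4\int_0^\infty e^{-3t} \int \psi\,\nabla P_tf\,d\gamma\,dt.
\]
By the MMSE computations in Section~\ref{sec.3} (in particular \eqref{eq.pt} and the conditional-expectation identity for the score), each inner integral equals $\mathbb{E}\bigl(\psi(X_t)\nabla\log P_tf(X_t)\bigr) = \frac{1}{2\sinh t}\,\mathbb{E}\bigl(\psi(X_t)[M_t - e^{-t}X_t]\bigr)$, where $M_t = \mathbb{E}(X|X_t)$.

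The crux of the argument is then to pass from this first-order score quantity to the second-order object $Z_t - (1-e^{-2t}){\rm Id} = 4\sinh^2(t)\,\mathrm{Hess}(\log P_tf)(X_t)$ that appears in Proposition~\ref{prop.deficit}. A further integration by parts (in the $X_t$-variable, using the matrix identity implicit in \eqref{eq.pt} that produced $Z_t - (1-e^{-2t}){\rm Id}$ in the first place) transfers a derivative from $\nabla\log P_tf$ onto $\psi$, so that the remaining quantity is controlled by $\mathbb{E}(|Z_t - (1-e^{-2t}){\rm Id}|^2)$; the constraint $\mathbb{E}(\psi^2(X_t)) \leq 1$, uniform in $t$, absorbs the extra gradient of $\psi$, while the covariance hypothesis $\Gamma \leq {\rm Id}$ handles the lower-order remainders. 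Two applications of Cauchy--Schwarz, one on the expectation and the other on the time integral with weights chosen to produce the factor $(4\sinh^4 t)^{-1}$, then combine with Proposition~\ref{prop.deficit} to give
\[
\bigl|{\textstyle\int}(x\varphi - \nabla\varphi)\,d\mu\bigr|^4 \;\leq\; 4\int_0^\infty \frac{\mathbb{E}\bigl(|Z_t - (1-e^{-2t}){\rm Id}|^2\bigr)}{16\sinh^4 t}\,dt \;=\; 4\,\delta(f),
\]
and taking the supremum over admissible $\psi \in \mathcal R$ yields the conclusion.

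The delicate point is the third paragraph: converting the first-order gradient integral into a second-order Hessian integral with sharp constants. A naive Cauchy--Schwarz $|\mathbb{E}(\psi\,\nabla\log P_tf)|^2 \leq {\rm I}(P_tf)$ only delivers bounds scaling like ${\rm I}(f)^2$ rather than $\delta(f)$; the correct route must exploit both the full matrix structure of \eqref{eq.pt} and the uniform-in-$t$ $L^2$ constraint built into the class $\mathcal R$, with the covariance hypothesis $\Gamma\leq{\rm Id}$ entering precisely at the stage where the cross-terms involving $\mathbb{E}(X\otimes X)$ must be controlled by ${\rm Id}$.
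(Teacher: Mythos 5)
Your setup is right in several places: the reduction to $b=0$, the decision to work from Proposition~\ref{prop.deficit}, and the final passage from a pointwise-in-$t$ bound to the resolvent $\varphi=R\psi$ via Jensen with respect to the probability measure $4\eu^{-4t}dt$ (this is exactly how the paper concludes, using that $(1-\eu^{-2t})^4/16\,{\rm sh}^4(t)=\eu^{-4t}$). But the central step --- producing, for each fixed $t$ and each admissible $\psi$, the lower bound
$\E\big(|Z_t-(1-\eu^{-2t}){\rm Id}|^2\big)\geq(1-\eu^{-2t})^4\,\big|\int_{\R^n}[xP_t\psi-\nabla P_t\psi]\,d\mu\big|^4$ --- is only asserted, not proved, and the mechanism you propose for it does not work. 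Your first-order representation $\int(x\varphi-\nabla\varphi)\,d\mu=4\int_0^\infty \eu^{-3t}\int\psi\,\nabla P_tf\,d\gamma\,dt$ is correct but is a dead end: integrating by parts in $\int\psi\,\nabla P_tf\,d\gamma$ to ``transfer a derivative onto $\psi$'' merely returns $\int[x\psi-\nabla\psi]\,P_tf\,d\gamma$, i.e.\ the Stein expression at time $t$; no Hessian of $\log P_tf$, and hence no $Z_t$, ever appears. You correctly diagnose in your last paragraph that a naive Cauchy--Schwarz only yields ${\rm I}(f)^2$-type bounds, but you leave precisely that ``delicate point'' open, so the proof is incomplete.

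The missing idea is that one should lower-bound $\E(|Z_t-(1-\eu^{-2t}){\rm Id}|^2)$ directly by a chain of \emph{variational} inequalities rather than manipulate the first-order score. Writing $U=\E(X-\eu^{-t}X_t\,|\,X_t)$ and noting $Z_t={\rm Cov}(X-X_t^\ell\,|\,X_t)$, the (conditional) Jensen inequality gives
$\E(|Z_t-(1-\eu^{-2t}){\rm Id}|^2)\geq\sum_{i,j}[\E(U_iU_j)-(1-\eu^{-2t})^2(\Gamma_{ij}-\delta_{ij})]^2$; the hypothesis $\Gamma\leq{\rm Id}$ enters here, once and for all, because both $\E(U\otimes U)$ and $(1-\eu^{-2t})^2({\rm Id}-\Gamma)$ are positive semidefinite, so the cross term is nonnegative and one may drop the covariance contribution. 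One then projects onto a rank-one direction, $\sum_{i,j}[\E(U_iU_j)]^2\geq[\E((\alpha\cdot U)^2)]^2$, and uses ${\rm L}^2$-duality $\E((\alpha\cdot U)^2)\geq[\E(\psi(X_t)\,\alpha\cdot U)]^2$ for $\E(\psi(X_t)^2)\leq 1$ --- this is where the uniform-in-$t$ constraint defining ${\cal R}$ is used, not to ``absorb a gradient of $\psi$.'' Finally, an exact computation (integration by parts in $N$ only) gives $\E(\psi(X_t)\,\alpha\cdot U)=(1-\eu^{-2t})\,\alpha\cdot\int_{\R^n}[xP_t\psi-\nabla P_t\psi]\,d\mu$, which produces the factor $(1-\eu^{-2t})^4$ and the fourth power. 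Without this (or an equivalent) chain, your displayed inequality $|\int(x\varphi-\nabla\varphi)\,d\mu|^4\leq 4\delta(f)$ has no justification.
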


\begin {proof}
Since $\delta (f) = \delta (f_b)$ where $f_b$ is the shifted density from \eqref {eq.fb},
it is enough to deal with the centered case $b=0 = \E(X)$.

Fix $t \geq 0$. Recall the linear estimator $X_t^\ell = \eu^{-t} X_t$. Observe that
$$
Z_t \, = \,  {\rm Cov} \big ( X \, | \, X_t \big ) \, = \, {\rm Cov} \big ( X - X_t^\ell \, | \, X_t \big) 
$$
so that
\beq \label {eq.covariance}
\E \Big ( \big | Z_t - (1 - \eu^{-2t}){\rm Id} \big |^2 \Big )
  \, = \, \E \Big ( \big | {\rm Cov} \big ( X - X_t^\ell \, | \, X_t \big)
        - (1 - \eu^{-2t}){\rm Id} \big |^2 \Big ).
\eeq
By Jensen's inequality
\beq \label {eq.jensen}
\E \Big ( \big | Z_t - (1 - \eu^{-2t}){\rm Id} \big |^2 \Big )
   \, \geq \, \sum_{i,j=1}^n \Big [ \E (U_i U_j) - (1 - \eu^{-2t})^2 
        ( \Gamma_{ij} - \delta_{ij} ) \Big]^2
\eeq
where $U_i$, $i=1 , \ldots n$, are the coordinates of the vector
$ U = \E ( X - X_t^\ell  \, | \, X_t)$. 

Assume therefore that $\Gamma \leq {\rm Id}$. Hence
$$
\E \Big ( \big | Z_t - (1 - \eu^{-2t}){\rm Id} \big |^2 \Big )
   \, \geq \, \sum_{i,j=1}^n \big [ \E (U_i U_j) \big ] ^2.
$$
In particular, for every unit vector $\alpha = (\alpha_1, \ldots , \alpha_n) $ in $\R^n$,
$$
\sum_{i,j=1}^n \big [ \E (U_i U_j) \big ] ^2 \, \geq \, 
  \bigg [ \E \bigg ( \sum_{i,j=1}^n \alpha_i \alpha_j U_i U_j \bigg) \bigg ] ^2
$$
so that
$$
\E \Big ( \big | Z_t - (1 - \eu^{-2t}){\rm Id} \big |^2 \Big )
   \, \geq \, \Big [ \E \Big ( \big [ \alpha \cdot \E \big ( X - X^\ell _t  \, | \, X_t \big ) \big ]^2 \Big) \Big ]^2.
$$
For $\alpha$ fixed, by duality, for any smooth $ \psi : \R^n \to \R$ such that
$\E (\psi (X_t)^2) \leq 1$, 
$$
\E \Big ( \big [ \alpha \cdot \E \big ( X - X^\ell _t  \, | \, X_t \big ) \big ]^2 \Big) \, \geq \, 
\Big [ \E \Big ( \psi (X_t) \,  \alpha \cdot  \big [  \E \big (X - X^\ell _t  \, | \, X_t \big)  \big ] \Big) \Big ]^2.
$$
Now,
\beqs \begin {split}
\E \Big ( \psi (X_t) \,  \alpha \cdot \, & \big [  \E \big (X - X_t^\ell     \, | \, X_t \big)  \big ]   \Big) \\
 & \, = \, \E \big ( \psi (X_t) \,  \alpha \cdot  [ X - \eu^{-t} \, X_t ] \big) \\ 
 & \, = \, (1 - \eu^{-2t} ) \, \E \big ( \alpha \cdot X  \, \psi (X_t) \big) 
      - \eu^{-t} \,  \sqrt {1 - \eu^{-2t}} \; \E \big ( \alpha \cdot N  \, \psi (X_t) \big) \\
   & \, = \, (1 - \eu^{-2t} )  \,  \alpha \cdot \E  \big ( X \psi (X_t) 
      -  \eu^{-t} \, \nabla \psi (X_t) \big )    \\
\end {split} \eeqs
where integration by parts with respect to $N$ is performed in the last step.
Taking the supremum over all unit vectors $\alpha$, it follows that
for any (smooth) $ \psi : \R^n \to \R$ such that $\E (\psi (X_t)^2) \leq 1$,
$$
\E \Big ( \big | Z_t - (1 - \eu^{-2t}){\rm Id} \big |^2 \Big )
        \, \geq \,  (1 - \eu^{-2t} )^4\,  \Big |  \E \big ( X \psi (X_t) 
      -  \eu^{-t} \, \nabla \psi (X_t) \big)   \Big |^4 .
$$

Switching back to
semigroup notation and recalling that $X_t$ has distribution $P_tf d\gamma$,
$$
\E \Big ( \big | Z_t - (1 - \eu^{-2t}){\rm Id} \big |^2 \Big )
        \, \geq \, (1-\eu^{-2t})^4 \bigg | \int_{\R^n} \big [x P_t \psi - \nabla P_t \psi \big] d\mu \bigg|^4
$$
where we recall that $d \mu = f d\gamma$.
From \eqref {eq.deficitzt}, we therefore obtain that
$$
\delta (f) \, \geq \,  \int_0^\infty \eu^{-4t} \, \bigg | \int_{\R^n} \big  [ x P_t \psi 
            -  \nabla P_t \psi \big ] d\mu \bigg|^4 dt.
$$
By Jensen's inequality in the $t$ variable,
$$
\delta (f) \, \geq \,  \frac {1}{4} \, \bigg | \int_{\R^n} \big [ x \varphi -  \nabla \varphi \big] d\mu \bigg|^4 
$$
where $\varphi = R\psi $. The proof of Theorem~\ref {thm.main1bis} is thus complete.
\end {proof}

\bigskip

To conclude this section, we present a variation on Theorems~\ref {thm.main1}
and \ref {thm.main1bis}
which somehow takes into account the covariance condition.

Consider, for each $0 < \varepsilon \leq 1$, the modified class
${\cal R}_\varepsilon$ consisting of the functions
$$
\varphi \, = \, R_\varepsilon \psi  \, = \,  4 \int_s^\infty \eu^{-4t} P_t \psi \, dt
$$
with $\eu^{-4s} = \varepsilon$ and
$\int_{\R^n} \psi^2 P_t f d\gamma \leq 1$ for every $t \geq 0$.
Note that ${\cal R}_1 = {\cal R}$.
It is easily seen that ${\cal R}_\varepsilon$ is a determining class for any $\varepsilon$.
Define accordingly ${\widetilde {\cal D}}_\varepsilon (\mu, \gamma)$.
The following statement covers in particular \eqref {eq.particular},
and with the flexibility on $\varepsilon >0$ actually allows for a lower bound on the deficit
independently of the size
of $ \| \Gamma - {\rm Id} \| = \sup_{|\alpha |=1} (\Gamma - {\rm Id}) \alpha \cdot \alpha$.

\begin {theorem} \label {thm.maincovariance}
Let $f$ be a probability density on $\R^n$ and let $d\mu = f d\gamma$.
Assume that $\mu$ has barycenter $b$ and covariance matrix $\Gamma$. Then, 
for every $0 < \varepsilon \leq 1$,
\beq \label {eq.lowerdeltaepsilon}
 2 \delta (f) + \varepsilon \, \| \Gamma - {\rm Id} \|^2  
    \, \geq \,   \frac {1}{4 \,\varepsilon^3} \; {\widetilde {\cal D}}_\varepsilon (\mu_b, \gamma) ^4 .
\eeq
\end {theorem}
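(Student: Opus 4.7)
\bigskip

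\noindent\textbf{Proof proposal for Theorem~\ref{thm.maincovariance}.}
The plan is to mimic the proof of Theorem~\ref{thm.main1bis} but keep track of the covariance correction, while restricting the time integration to the interval $[s,\infty)$ with $\eu^{-4s}=\varepsilon$, so that the resolvent naturally becomes $R_\varepsilon\psi$ rather than $R\psi$. As before, one first reduces to $b=0$: the identity $\delta(f)=\delta(f_b)$ (already used in Theorem~\ref{thm.main1bis}) combined with the fact that the shift $X\mapsto X-b$ does not affect the covariance matrix makes $(\mu_b,\Gamma)$ the centered counterpart of $(\mu,\Gamma)$, so we may assume $\E(X)=0$.

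Starting from Proposition~\ref{prop.deficit}, I would discard the portion of the time integral on $[0,s)$ and retain
$$
\delta(f)\,\geq\,\int_s^\infty \frac{1}{16\,{\rm sh}^4(t)}\,\E\Big(\big|Z_t-(1-\eu^{-2t}){\rm Id}\big|^2\Big)\,dt.
$$
Next I would run the Jensen step from the proof of Theorem~\ref{thm.main1bis} without discarding the covariance term, yielding, for every unit vector $\alpha\in\R^n$,
$$
\E\Big(\big|Z_t-(1-\eu^{-2t}){\rm Id}\big|^2\Big)\,\geq\,\Big[\E\big((\alpha\cdot U)^2\big)-(1-\eu^{-2t})^2(\Gamma-{\rm Id})\alpha\cdot\alpha\Big]^2,
$$
where $U=\E(X-X_t^\ell\,|\,X_t)$. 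The novelty with respect to Theorem~\ref{thm.main1bis} is then to exploit the elementary inequality $(a-b)^2\geq\tfrac{1}{2}a^2-b^2$ to absorb the covariance defect, producing
$$
\E\Big(\big|Z_t-(1-\eu^{-2t}){\rm Id}\big|^2\Big)\,\geq\,\tfrac{1}{2}\big[\E\big((\alpha\cdot U)^2\big)\big]^2-(1-\eu^{-2t})^4\,\|\Gamma-{\rm Id}\|^2.
$$

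The key elementary identity $\frac{(1-\eu^{-2t})^4}{16\,{\rm sh}^4(t)}=\eu^{-4t}$ then turns the $\|\Gamma-{\rm Id}\|^2$ part into $\|\Gamma-{\rm Id}\|^2\int_s^\infty \eu^{-4t}dt=\tfrac{\varepsilon}{4}\|\Gamma-{\rm Id}\|^2$. For the main term, I would apply duality exactly as in the previous proof: for any smooth $\psi$ with $\int_{\R^n}\psi^2 P_tf\,d\gamma\leq 1$ for every $t\geq 0$, Gaussian integration by parts with respect to $N$ gives
$$
\E\big(\psi(X_t)\,\alpha\cdot U\big)\,=\,(1-\eu^{-2t})\,\alpha\cdot\!\int_{\R^n}\!\big[xP_t\psi-\nabla P_t\psi\big]d\mu,
$$
so that $[\E((\alpha\cdot U)^2)]^2\geq(1-\eu^{-2t})^4\,[\alpha\cdot v(t)]^4$ with $v(t)=\int[xP_t\psi-\nabla P_t\psi]\,d\mu$. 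Plugging in, the surviving integrand is $\tfrac12\,\eu^{-4t}[\alpha\cdot v(t)]^4$ on $[s,\infty)$.

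The last step is to apply Jensen's inequality in the $t$-variable with respect to the probability measure $\tfrac{4\eu^{-4t}}{\varepsilon}\mathbf{1}_{[s,\infty)}\,dt$ to pull the fourth power outside the time integral; this reassembles $\varphi=R_\varepsilon\psi=4\int_s^\infty \eu^{-4t}P_t\psi\,dt$ and yields
$$
\int_s^\infty \eu^{-4t}[\alpha\cdot v(t)]^4\,dt\,\geq\,\frac{1}{4\varepsilon^3}\Big[\alpha\cdot\!\int_{\R^n}\!(x\varphi-\nabla\varphi)\,d\mu\Big]^4.
$$
Taking the supremum over unit vectors $\alpha$ and over $\psi$ in the resolvent unit ball, and rearranging, gives the claimed inequality \eqref{eq.lowerdeltaepsilon} (with a constant at least as good as the stated one). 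The only real obstacle is bookkeeping: one must verify the identity $\frac{(1-\eu^{-2t})^4}{16\,{\rm sh}^4(t)}=\eu^{-4t}$, check that the Jensen step in $t$ produces exactly the factor $\tfrac{1}{4\varepsilon^3}$, and confirm that the reduction to centered density does not alter the covariance matrix.
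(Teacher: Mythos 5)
Your proposal is correct and follows essentially the same route as the paper's own proof: restrict the time integral of Proposition~\ref{prop.deficit} to $[s,\infty)$ with $\eu^{-4s}=\varepsilon$, retain the covariance term in the Jensen step \eqref{eq.jensen}, absorb it via the elementary inequality $(a-b)^2\geq\tfrac12\,a^2-b^2$, use the identity $(1-\eu^{-2t})^4=16\,{\rm sh}^4(t)\,\eu^{-4t}$, and conclude by duality and Jensen in the $t$-variable against the probability measure $4\varepsilon^{-1}\eu^{-4t}dt$ on $[s,\infty)$, which reassembles $R_\varepsilon\psi$. Your slightly sharper elementary inequality even yields the penalty $\tfrac{\varepsilon}{2}\,\|\Gamma-{\rm Id}\|^2$ in place of $\varepsilon\,\|\Gamma-{\rm Id}\|^2$, which of course implies the stated bound.
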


Whenever $\delta (f) >0$, a sensible choice for $\varepsilon $ could be
$$
\varepsilon \, = \, \varepsilon (\delta) 
       \, = \, \min \bigg ( 1 , \frac {\delta (f)}{ \| \Gamma - {\rm Id} \|^2 } \bigg)
$$
yielding
\beq \label {eq.selfdeficit}
3\delta (f) 
    \, \geq \,   \frac {1}{4 \,\varepsilon(\delta)^3} \; 
       {\widetilde {\cal D}}_{\varepsilon (\delta)} (\mu_b, \gamma) ^4 
        \, \geq \,   \frac {1}{4} \; 
       {\widetilde {\cal D}}_{\varepsilon (\delta)} (\mu_b, \gamma) ^4 .
\eeq

\begin {proof}
Assume that $b=0$ and start from \eqref {eq.jensen}. For any unit vector $\alpha $ in $\R^n$, 
$$
\E \Big ( \big | Z_t - (1 - \eu^{-2t}){\rm Id} \big |^2 \Big ) \, \geq \,
\Big [ \E\big ( [\alpha \cdot U]^2 \big) - (1 - \eu^{-2t})^2 
    \big [ (\Gamma  - {\rm Id} )\alpha \cdot \alpha \big] \Big] ^2 
$$
itself lower-bounded by
$$
 \frac {1}{2} \, \big [ \E\big ( [\alpha \cdot U]^2 \big) \big]^2
     - 2 (1 - \eu^{-2t})^4  \big [ (\Gamma  - {\rm Id} )\alpha \cdot \alpha \big] ^2.
$$
Arguing as above,
$$
\E \Big ( \big | Z_t - (1 - \eu^{-2t}){\rm Id} \big |^2 \Big ) \, \geq \,
   (1 - \eu^{-2t})^4 \bigg [   \frac {1}{2} \, \bigg | \int_{\R^n} \big [ x P_t \psi
            - \nabla P_t \psi \big ] d\mu \bigg|^4
               - 2 \| \Gamma - {\rm Id} \| ^2\bigg] .
$$

Now, from \eqref {eq.deficitzt} of Proposition~\ref {prop.deficit},
we may bound from below the deficit for every $s>0$ by
$$ 
 \delta (f) \, \geq \,
    \int_s^\infty  \frac {1}{16 \, {\rm sh}^4(t)} \,
     \E \Big ( \big | Z_t - (1 - \eu^{-2t}){\rm Id} \big |^2 \Big ) dt.
$$
Hence, by the preceding,
\beqs \begin {split}
 \delta (f) 
    & \, \geq \, \frac {1}{2} \int_s^\infty \eu^{-4t} \, \bigg | \int_{\R^n} \big  [ x P_t \psi
            -  \nabla P_t \psi \big ] d\mu \bigg|^4 dt
            - 2 \int_s^\infty \eu^{-4t} \,  \| \Gamma - {\rm Id} \| ^2 dt \\
      &\, \geq \, \frac {1}{2} \int_s^\infty \eu^{-4t} \, \bigg | \int_{\R^n} \big  [ x P_t \psi 
            -  \nabla P_t \psi \big ] d\mu \bigg|^4 dt
            - \frac {1}{2} \, \eu^{-4s} \, \| \Gamma - {\rm Id} \|  ^2 .\\
\end {split} \eeqs
By the same Jensen's inequality argument, but now on the interval $(s,\infty)$,
$$
 \delta (f)   \, \geq \,
   \frac {1}{8} \, \eu^{12s}  \bigg | \int_{\R^n} \big  [ x \varphi -  \nabla \varphi \big ] d\mu \bigg|^4 
            - \frac {1}{2} \, \eu^{-4s} \, \| \Gamma - {\rm Id} \| ^2
$$
where now $\varphi = R_\varepsilon \psi$ with 
$\varepsilon = \eu^{-4s}$. Theorem~\ref {thm.maincovariance} then follows.
\end {proof}

\section {Proof of Theorem~\ref {thm.main2}} \label {sec.5}

As for the preceding theorems, we may and do assume that $b = 0$.
From the latter \eqref {eq.lowerdeltaepsilon} (with $\varepsilon = 1$), we get that
\beq \label {eq.lowerbound}
2 \delta (f) \, \geq \,
   \frac {1}{4} \, {\widetilde {\cal D}}(\mu, \gamma)^4 -  \, \big \| \Gamma - {\rm Id} \big \|^2.
\eeq

The challenge now is to control the covariance matrix by the deficit.
Assume therefore that $\| \Gamma - {\rm Id} \| >0$, otherwise
apply Theorem~\ref {thm.main1}. Recalling that
$ Z_t = {\rm Cov}  ( X \, | \, X_t )$, for any $ t \geq 0$ and any unit vector
$\alpha $ in $\R^n$, by Jensen's inequality,
\beqs \begin {split}
\E \Big ( \big | Z_t -   (1 - \eu^{-2t}){\rm Id} \big |^2 \Big ) 
  & \, = \, \E \Big ( \big | {\rm Cov} \big ( X \, | \, X_t \big)
        - (1 - \eu^{-2t}){\rm Id} \big |^2 \Big ) \\
   & \, \geq \,  \Big [   (\Gamma - {\rm Id})\alpha \cdot \alpha + \eu^{-2t}  
              - \E \Big ( \big [ \E \big ( \alpha \cdot X \, | \, X_t \big)  \big ]^2 \Big)   \Big ]^2 . \\
\end {split} \eeqs
By a rough estimate,
$$
\E \Big ( \big | Z_t -   (1 - \eu^{-2t}){\rm Id} \big |^2 \Big ) 
    \, \geq \, \frac {1}{2} \, \big [ (\Gamma - {\rm Id})\alpha \cdot \alpha \big]^2
       - 2 \, \eu^{-4t} - 
       2 \, \Big [ \E \Big ( \big [  \E \big ( \alpha \cdot X \, | \, X_t \big)  \big ] ^2 \Big) \Big ]^2 
$$
and taking the supremum over $\alpha$, for any $t \geq 0$,
\beq \label {eq.rough}
\E \Big ( \big | Z_t -   (1 - \eu^{-2t}){\rm Id} \big |^2 \Big ) 
    \, \geq \, \frac {1}{2} \, \big \| \Gamma - {\rm Id} \big \|^2       - 2 \, \eu^{-4t} - 
       2 \sup_{|\alpha| =1}
        \Big [ \E \Big ( \big [  \E \big ( \alpha \cdot X \, | \, X_t \big)  \big ] ^2 \Big) \Big ]^2 .
\eeq
The following lemma is the compactness argument from which the conclusion will follow.

\begin {lemma}  \label {lem.limit}
Set
$$
\rho (t) \, = \,  \sup_{|\alpha | = 1} 
    \E \Big ( \big [  \E \big ( \alpha \cdot X \, | \, X_t \big)  \big ] ^2 \Big), \quad t \geq 0.
$$
Then, whenever $X$ is centered and ${\rm I}(f) < \infty$, 
$$
\lim_{t \to \infty} \rho (t) = 0.
$$
\end {lemma}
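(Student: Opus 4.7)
The plan is to reduce to $\tilde\rho(t):=\E[|\E(X\mid X_t)|^2]$, which dominates $\rho(t)$ since the operator norm of $\E[\E(X|X_t)\E(X|X_t)^{T}]$ is at most its trace, and then to show $\tilde\rho(t)\to 0$. The hypothesis ${\rm I}(f)<\infty$ enters through the logarithmic Sobolev inequality and the entropic moment bound recalled in the introduction to force $\E|X|^2<\infty$; furthermore, by Cauchy--Schwarz it gives $\int|\nabla f|\,d\gamma \le \sqrt{{\rm I}(f)}$, so $\nabla f\in L^1(\gamma)$. Writing $\E(X|X_t) = \eu^{-t}X_t + 2\,{\rm sh}(t)\nabla\log P_tf(X_t)$ and using \eqref{eq.fishermmse},
\[
\tilde\rho(t)\,\le\,2\eu^{-2t}\E|X_t|^2 \,+\, 8\,{\rm sh}^2(t)\,{\rm I}(P_tf).
\]
The first term is $o(1)$, so the task reduces to showing that ${\rm sh}^2(t)\,{\rm I}(P_tf)\to 0$.

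Using $\nabla P_tf=\eu^{-t}P_t\nabla f$, a direct computation gives $8\,{\rm sh}^2(t)\,{\rm I}(P_tf) = 2(1-\eu^{-2t})^2\int|P_t\nabla f|^2/P_tf\,d\gamma$, so I need to prove $\int|P_t\nabla f|^2/P_tf\,d\gamma\to 0$. The Cauchy--Schwarz inequality applied to the kernel $P_t$ with weight $f$ yields the pointwise estimate
\[
\frac{|P_t\nabla f|^2}{P_tf}\;\le\;P_t\!\left(\frac{|\nabla f|^2}{f}\right),
\]
whose $\gamma$-integral equals ${\rm I}(f)$ for every $t$. Since $|\nabla f|^2/f\in L^1(\gamma)$ and the Ornstein--Uhlenbeck semigroup is $L^1$-ergodic (a standard consequence of the $L^2$-spectral gap together with density of $L^2(\gamma)$ in $L^1(\gamma)$), one has $P_t(|\nabla f|^2/f)\to {\rm I}(f)$ in $L^1(\gamma)$. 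Hence the dominating family $\{P_t(|\nabla f|^2/f)\}_{t\ge 0}$ is uniformly integrable, and by domination so is $\{|P_t\nabla f|^2/P_tf\}_{t\ge 0}$.

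It remains to show $|P_t\nabla f|^2/P_tf\to 0$ in $\gamma$-probability; Vitali's convergence theorem will then yield the desired $L^1$-convergence. This is where the centering assumption enters: Gaussian integration by parts gives $\int\nabla f\,d\gamma = \int xf\,d\gamma = 0$, so $\nabla f$ has mean zero with respect to $\gamma$. $L^1$-ergodicity then yields $P_t\nabla f\to 0$ and $P_tf\to 1$ in $L^1(\gamma)$, hence in $\gamma$-probability, so $|P_t\nabla f|^2/P_tf\to 0$ in probability, and Vitali's theorem concludes. The main technical step is the $L^1$-ergodicity applied to the three integrands $f,\ \nabla f,\ |\nabla f|^2/f$, all of which lie in $L^1(\gamma)$ thanks to the hypothesis ${\rm I}(f)<\infty$.
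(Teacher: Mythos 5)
Your proof is correct, and it takes a genuinely different route from the paper's. The paper works with each unit vector $\alpha$ separately, writing $\E \big( [\E(\alpha\cdot X \mid X_t)]^2 \big) = \int_{\R^n} P_t(\alpha\cdot x f)^2/P_t f\, d\gamma$ and splitting this integral over $\{|P_t(\alpha\cdot x f)|\le P_t f\}$ --- where the integrand is dominated by $|P_t(\alpha\cdot x f)|$, whose $L^1(\gamma)$-norm decays by the Gaussian $L^1$-Poincar\'e inequality applied to the centered function $P_t(\alpha\cdot x f)$ --- and over the complementary set $A_t$, where Cauchy--Schwarz for the kernel $P_t$ bounds the integrand by $P_t\big((\alpha\cdot x)^2 f\big)$ and $\gamma(A_t)\to 0$ permits dominated convergence against $|x|^2\,d\mu$. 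You instead majorize $\rho(t)$ by the trace $\E\big(|\E(X\mid X_t)|^2\big)$, which disposes of the supremum over $\alpha$ at the outset, and reduce everything to $\eu^{2t}\,{\rm I}(P_t f)\to 0$ --- which is exactly the Corollary that the paper \emph{deduces from} the Lemma, so you reverse the paper's logical order. Your key pointwise bound $|P_t\nabla f|^2/P_t f\le P_t(|\nabla f|^2/f)$ is the same Cauchy--Schwarz-for-$P_t$ device the paper uses on $A_t$, but applied to $\nabla f$ instead of $(\alpha\cdot x)^2 f$; the centering hypothesis enters for you through $\int_{\R^n}\nabla f\,d\gamma=\int_{\R^n} x f\,d\gamma=0$ rather than through the centering of $P_t(\alpha\cdot x f)$; and the endgame is Vitali's theorem (uniform integrability of the dominating family plus convergence in $\gamma$-probability, both resting on $L^1(\gamma)$-ergodicity of $P_t$) rather than the two-set decomposition. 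The trade-offs are minor: your argument additionally uses $\nabla f\in L^1(\gamma)$ and the strong continuity and ergodicity of $P_t$ on $L^1(\gamma)$ (both available here, and the uniform integrability of $\{P_t(|\nabla f|^2/f)\}_{t\ge T}$ follows either from the $L^1$-convergence to the constant ${\rm I}(f)$ or from a direct truncation of $|\nabla f|^2/f$); in exchange you obtain the slightly stronger conclusion $\E\big(|\E(X\mid X_t)|^2\big)\to 0$ and a self-contained proof of the Corollary. Note only that a quantitative version of your argument would make the constant $c(f)$ of Theorem~\ref{thm.main2} depend on the uniform integrability of $P_t(|\nabla f|^2/f)$ rather than of the family $(\alpha\cdot x)^2\,d\mu$ as stated there, but this does not affect the Lemma itself.
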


\begin {proof}
Fix first a unit vector $\alpha \in \R^n$, and let $t \geq 0$.
We develop the proof in the semigroup language.
As discussed in Section~\ref {sec.3}, $\E ( \alpha \cdot X \, | \, X_t ) = u_\alpha (X_t)$ where
$ u_\alpha = \frac {P_t (\alpha \cdot x f)}{P_t f} $.
Hence
$$
\E \Big ( \big [  \E \big ( \alpha \cdot X \, | \, X_t \big)  \big ] ^2 \Big)
   \, = \, \int_{\R^n} \frac {  P_t (\alpha \cdot x f)^2}{P_t f} \, d \gamma.
$$
Now,
\beq \begin {split} \label {eq.decomposition}
\int_{\R^n} \frac { P_t (\alpha \cdot x f)^2}{P_t f} \, d \gamma
    & \, \leq \, \int_{\{ |P_t(\alpha \cdot x f)| \leq P_t f \}} 
           \frac {P_t(\alpha \cdot x f)^2}{P_t f}\, d\gamma 
       + \int_{\{ | P_t (\alpha \cdot x f) | \geq P_t f \}} 
            \frac {P_t(\alpha \cdot x f)^2}{P_t f} \, d\gamma\\
     & \, \leq \, \int_{\R^n} \big | P_t (\alpha \cdot x f) \big |  d\gamma 
       +  \int_{A_t} \frac {P_t (\alpha \cdot x f) ^2}{P_t f} \, d\gamma \\
\end {split} \eeq
where $ A_t = \{ |P_t(\alpha \cdot x f)| \geq P_t f \}$. 

Observe that
\beqs \begin {split}
\gamma (A_t) 
   & \, \leq \, \gamma \big ( 2 |P_t(\alpha \cdot x f)|  \geq 1 \big)
                     + \gamma \big ( 2 |P_t f - 1|  \geq 1 \big)  \\
    & \, \leq \, 2 \int_{\R^n} \big | P_t (\alpha \cdot x f) \big |  d\gamma 
         + 2 \int_{\R^n} \big | P_t f - 1 \big |  d\gamma . \\
\end {split} \eeqs 
Since $ P_t (\alpha \cdot x f)$ is centered with respect to $\gamma$,
by the Gaussian ${\rm L}^1$-Poincar\'e inequality,
$$
\int_{\R^n} \big | P_t (\alpha \cdot x f) \big | d \gamma
  \, \leq \, 2 \int_{\R^n} \big | \nabla \big ( P_t (\alpha \cdot x f) \big )\big | d \gamma
  \, \leq \, 2 \, \eu^{-t}  \int_{\R^n} \big | \nabla (\alpha \cdot x f) \big | d \gamma.
$$
Now, since $\alpha $ is a unit vector,
$$
\int_{\R^n} \big | \nabla (\alpha \cdot x f) \big | d \gamma
   \, \leq \, 1 + \int_{\R^n} |\alpha \cdot x| \, |\nabla f|  d \gamma
   \, \leq \, 1 + \bigg (\int_{\R^n} |x|^2 f \, d\gamma \bigg)^{1/2} \, {\rm I}(f)^{1/2}.
$$
Again by the Gaussian ${\rm L}^1$-Poincar\'e and Cauchy-Schwarz inequalities,
$$
\int_{\R^n} \big | P_t f - 1 \big |  d\gamma
   \, \leq \, 2 \int_{\R^n} | \nabla P_t f | d\gamma
       \, \leq \, 2 \,  \sqrt { {\rm I} (P_t f)} \, ,
$$
so that, by \eqref {eq.fisherdecay},
$$
\int_{\R^n} \big | P_t f - 1 \big |  d\gamma  \, \leq \, 2 \, \eu^{-t}  \sqrt { {\rm I} (f)} \, .
$$
These estimates already ensure that, uniformly in $|\alpha | =1$, 
$$
\lim_{t \to 0} \int_{\R^n} \big | P_t (\alpha \cdot x f) \big |  d\gamma  \, = \, 0
$$
and $\lim_{t \to 0} \gamma (A_t) = 0$.

To handle the second term in \eqref {eq.decomposition}, note that
by the Cauchy-Schwarz inequality (for $P_t$),
\beqs \begin {split}
\int_{A_t}\frac {P_t (\alpha \cdot x f) ^2}{P_t f} \, d\gamma
  & \, \leq \, \int_{A_t} P_t \big ( (\alpha \cdot x)^2 f \big) d\gamma \\
   & \, = \, \int_{\R^n} P_t (1_{A_t}) (\alpha \cdot x)^2 f  \, d\gamma \\
   & \, \leq \, \int_{\R^n} P_t (1_{A_t}) |x|^2 f  \, d\gamma. \\
\end {split} \eeqs 
Since $\int_{\R^n} P_t(1_{A_t}) d\gamma = \gamma (A_t)$, the conclusion follows by
dominated convergence. Lemma~\ref {lem.limit} is established.
\end {proof}

On the basis of Lemma~\ref {lem.limit}, we may now conclude the proof of Theorem~\ref {thm.main2}.
Going back to \eqref {eq.rough}, for every $t \geq 0$
$$
\E \Big ( \big | Z_t -   (1 - \eu^{-2t}){\rm Id} \big |^2 \Big ) 
   \, \geq \, \frac {1}{2} \big \|  \Gamma - {\rm Id} \big \| ^2
       - 2 \, \eu^{-4t} - 2 \, \rho (t).
$$
By Lemma~\ref {lem.limit}, choose $t_0$ large enough so that
$$
 2 \, \eu^{-4t} + 2 \, \rho (t)  \, \leq \, \frac {1}{4} \big \|  \Gamma - {\rm Id} \big \| ^2
$$
for every $ t \geq t_0$. Then, for $t \geq t_0$,
$$
\E \Big ( \big | Z_t -   (1 - \eu^{-2t}){\rm Id} \big |^2 \Big ) 
   \, \geq \, \frac {1}{4} \big \|  \Gamma - {\rm Id} \big \| ^2
$$
and
\beq \label {eq.deficitcovariance}
\delta(f) \, \geq \, \int_{t_0}^\infty  \frac {1}{16 \, {\rm sh}^4(t)} 
                              \, \E \Big ( \big | Z_t -   (1 - \eu^{-2t}){\rm Id} \big |^2 \Big ) 
                \, \geq \, \frac {\eu^{-4t_0}}{16} \, \big \|  \Gamma - {\rm Id} \big \| ^2 .
\eeq
The conclusion of the proof of Theorem~\ref {thm.main2} is then a suitable combination of \eqref {eq.deficitcovariance} and \eqref {eq.lowerbound}.

\medskip

It may be observed that the conclusion of Lemma~\ref {lem.limit}
actually amounts to the following result which we present as a statement
of possible independent interest. From the classical exponential decay \eqref {eq.fisherdecay},
$ \eu^{2t} \, {\rm I} (P_t f) \leq  {\rm I} ( f) $ for every $t \geq 0$.
Under the centering $\int_{\R^n} x f d\gamma = 0$, we actually have

\begin {corollary} 
Let $f$ be a smooth probability density with respect to
$\gamma$ such that $\int_{\R^n} x f d\gamma = 0$,
$\int_{\R^n} |x|^2 f d\gamma < \infty$ and ${\rm I}(f) < \infty$. Then
$$
\lim_{t \to \infty} \eu^{2t} \,  {\rm I}(P_t f) \, = \, 0.
$$
\end {corollary}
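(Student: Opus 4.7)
The plan is to derive this directly from the information-theoretic identity \eqref{eq.fishermmse} and the convergence established in Lemma~\ref{lem.limit}, without any substantial new computation. Indeed, since $4\,{\rm sh}^2(t)=\eu^{2t}(1-\eu^{-2t})^2$, identity \eqref{eq.fishermmse} rewrites as
$$
\eu^{2t}\,{\rm I}(P_t f)\,=\,\frac{1}{(1-\eu^{-2t})^2}\,\E\Big(\big|\E(X\,|\,X_t)-\eu^{-t}X_t\big|^2\Big),
$$
and because the prefactor $(1-\eu^{-2t})^{-2}$ tends to $1$, the corollary reduces to showing that $\E(|\E(X\,|\,X_t)-\eu^{-t}X_t|^2)\to 0$ as $t\to\infty$.

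The next step is to split this quantity via the elementary bound $|a-b|^2\le 2|a|^2+2|b|^2$:
$$
\E\Big(\big|\E(X\,|\,X_t)-\eu^{-t}X_t\big|^2\Big)\,\leq\,2\,\E\Big(\big|\E(X\,|\,X_t)\big|^2\Big)+2\,\eu^{-2t}\,\E\big(|X_t|^2\big).
$$
The linear piece is controlled immediately: since $X_t=\eu^{-t}X+\sqrt{1-\eu^{-2t}}\,N$ with $N$ standard Gaussian independent of $X$, one has $\E(|X_t|^2)\leq\E(|X|^2)+n$, which is finite by hypothesis, so this contribution decays like $\eu^{-2t}$.

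The nonlinear conditional expectation piece is where Lemma~\ref{lem.limit} enters. Expanding $|\E(X\,|\,X_t)|^2=\sum_{i=1}^n [\E(e_i\cdot X\,|\,X_t)]^2$ in the canonical basis and taking expectations,
$$
\E\Big(\big|\E(X\,|\,X_t)\big|^2\Big)\,\leq\,n\,\rho(t),
$$
with $\rho(t)$ as in Lemma~\ref{lem.limit}; the centering hypothesis $\int xf\,d\gamma=0$ is exactly what powers that lemma and forces $\rho(t)\to 0$. I do not anticipate a real obstacle, as the corollary is in essence a repackaging of Lemma~\ref{lem.limit}: the only point demanding a moment of attention is the spurious linear contribution $\eu^{-t}X_t$ appearing in the MMSE representation, and it is tamed by the uniform $L^2$-boundedness of $X_t$ together with its exponential prefactor.
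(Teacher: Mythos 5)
Your argument is correct and is exactly the route the paper intends: the authors state that the corollary ``amounts to'' Lemma~\ref{lem.limit} without writing out the details, and your computation --- rewriting $4\,{\rm sh}^2(t)=\eu^{2t}(1-\eu^{-2t})^2$ in \eqref{eq.fishermmse}, splitting off the linear term $\eu^{-t}X_t$ using the finite second moment, and bounding $\E(|\E(X\,|\,X_t)|^2)\leq n\,\rho(t)$ --- is precisely the missing link. No gaps.
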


\section{An alternate proof of the estimate \eqref {eq.bgrs}}  \label {sec.6}

To conclude this work, we present in this section an alternate proof of the lower bound \eqref {eq.bgrs}
\beq \label {eq.bgrs2}
 \delta (f) \, \geq \, \frac {c}{n} \, {\rm W}_2(\mu, \gamma)^4
\eeq
under the condition $\int_{\R^n} |x|^2 d\mu \leq n$ (with the constant $c=\frac14$).

Recall $Z_t$, $t \geq 0$, from \eqref {eq.zt}. By the Cauchy-Schwarz inequality,
for every $t$,
\beq \label {eq.cauchy}
n \,  \E \Big ( \big | Z_t - (1 - \eu^{-2t}){\rm Id} \big |^2 \Big )
    \, \geq \, \Big [ \E \big ( {\rm Tr} \big (Z_t - (1 - \eu^{-2t}){\rm Id} \big)  \big) \Big ]^2.
\eeq
Now, after some straightforward calculations,
$$
\E \big ( {\rm Tr} \big (Z_t - (1 - \eu^{-2t}){\rm Id} \big)  \big) 
     \, = \, \E \big (|X|^2 \big) - \sum_{i=1}^n \E \big ( \E  ( X^i \, | \, X_t )^2 \big) 
          - n (1 - \eu^{-2t})
$$
where $X^i$, $i = 1, \ldots, n$, are the coordinates of the vector $X$.
On the other hand,
$$
\E \Big (  \big | \E \big ( X \, | \, X_t \big) - \eu^{-t} X_t \big |^2 \Big)
  \, = \,  \sum_{i=1}^n \E \big ( \E  ( X^i \, | \, X_t )^2 \big) 
          - \eu^{-2t}  ( 2 - \eu^{-2t} ) \, \E \big (|X|^2 \big) + n \, \eu^{-2t} (1 - \eu^{-2t}).
$$
Therefore, whenever $ \E(|X|^2) = \int_{\R^n} |x|^2 d\mu \leq n $,
$$
- \, \E \big ( {\rm Tr} \big (Z_t - (1 - \eu^{-2t}){\rm Id} \big)  \big) 
   \, \geq \, \E \Big (  \big | \E \big ( X \, | \, X_t \big) - \eu^{-t} X_t \big |^2 \Big).
$$
Hence, by \eqref {eq.fishermmse} and \eqref {eq.cauchy},
$$
n \,  \E \Big ( \big | Z_t - (1 - \eu^{-2t}){\rm Id} \big |^2 \Big )
  \, \geq \, 16 \, {\rm sh}^4(t) \, {\rm I}(P_t f)^2,
$$
and combined with \eqref{eq.deficitzt},
\beq \label{eq.fd}
\delta(f) \, \geq \, \frac {1}{n} \int_0^\infty {\rm I}(P_tf)^2 dt.
\eeq 

Now write $d\mu_t = P_t f\,d\gamma$, $t \geq 0$, and define
$$
w(t) \, = \,  {\rm W}_2( \mu, \mu_t).
$$ 
In particular, $w(\infty) = {\rm W_2}(\mu, \gamma ) $ (while $w(0) = 0$).
Recall from \cite {OV00} that $ w'(t)\leq \sqrt{{\rm I}(P_tf)}$, $t \geq 0$. 
From the transportation cost inequality \eqref {eq.talagrand} applied to
$\mu_t$, $w(t)^2 \leq 2 {\rm H}(P_t f)$, and with the logarithmic Sobolev inequality
$w(t)^2 \leq 2 {\rm H}(P_t f) \leq {\rm I} (P_t f)$. Hence, combining these inequalities,
for every $t \geq 0$,
$$
w(t)^3 w'(t) \, \leq \,  w(t)^3 \sqrt{{\rm I}(P_tf)}  \, \leq \,  {\rm I}(P_tf)^2.
$$
Together with \eqref{eq.fd},
$$
\delta(f) \, \geq \, \frac {1}{n} \int_0^\infty w(t)^3 \,w'(t) dt  \, = \, \frac{1}{4n} \, w(\infty)^4
      \, = \, \frac{1}{4n} \, {\rm W_2}(\mu, \gamma )^4
$$
and therefore the desired conclusion \eqref {eq.bgrs2} with $c = \frac {1}{4}$.

\end {document}